\newtheorem{theorem}{Theorem}[section]\newtheorem{thm}[theorem]{Theorem}
\newtheorem*{theorem*}{Theorem}
\newtheorem{lemma}{Lemma}[section]
\newtheorem{prop}{Proposition}[section]
\newtheorem{remark}[theorem]{Remark}
\def\Ric{\text{Ric}}
\def\a{\alpha}
\def\l{\lambda}
\def\p{\partial}
\def\R{\Bbb R}
\def\vp{\varphi}
\def\L{{\mathcal L}}
\def\Ric{\operatorname{Ric}}
\newcommand{\eps}{{\varepsilon}}
\numberwithin{equation}{section}
\begin{document}

\title[First nonzero eigenvalue of the weighted $p$-Laplacian]{Sharp lower bound for the first eigenvalue of the weighted $p$-Laplacian II}

\author{Xiaolong Li}
\address{Department of Mathematics, University of California, Irvine, Irvine, CA 92697, USA}
\email{xiaolol1@uci.edu}

\author{Kui Wang}\thanks{The research of the second author is supported by NSFC No.11601359} 
\address{School of Mathematical Sciences, Soochow University, Suzhou, 215006, China}
\email{kuiwang@suda.edu.cn}

\subjclass[2010]{35P15, 35P30, 58C40, 58J50}
\keywords{Eigenvalue estimates, weighted $p$-Laplacian, Bakry-\'Emery manifolds, gradient comparison theorems}

\maketitle

\begin{abstract}   
Combined with our previous work \cite{LW19eigenvalue}, we prove sharp lower bound estimates for the first nonzero eigenvalue of the weighted $p$-Laplacian with $1< p< \infty$ on a compact Bakry-\'Emery manifold $(M^n,g,f)$, without boundary or with a convex boundary and Neumann boundary condition, 
satisfying $\Ric+\nabla^2 f \geq \kappa \, g$ for some $\kappa \in \mathbb{R}$. 
\end{abstract}


\section{Introduction}

The determination of lower bounds for the first nonzero eigenvalue of elliptic operators is an important issue in both mathematics and physics, since this constant determines the convergence rate of numerical schemes in numerical analysis, describes the energy of a particle in the ground state in quantum mechanics, and determines the decay rate of heat flows in thermodynamics. 
Given its physical and mathematical significance, sharp lower bounds of the first nonzero eigenvalue of the Laplace-Beltrami operator on a compact Riemannian manifold
or the $f$-Laplacian on a compact Bakry-\'Emery manifold (without boundary or with a convex boundary and Neumann boundary condition) in terms of geometric data have been established via the efforts of many mathematicians including \cite{BQ00, Kroger92, Li79, LY80, ZY84} by the year 2000. Simple alternative proofs via the estimates of modulus of continuity were found in recent years in \cite{AC13, AN12}. 

In the last two decades, much attention has been focused on eigenvalue problems of nonlinear operators, especially the $p$-Laplacian $\Delta_p$ and the weighted $p$-Laplacian $\Delta_{p,f}$, defined for $1<p<\infty$ by
\begin{equation*}
    \Delta_{p} u:= \text{div} (|\nabla u|^{p-2}\nabla u)
\end{equation*}
and 
\begin{equation*}
    \Delta_{p,f} u:= e^f\text{div} (e^{-f}|\nabla u|^{p-2}\nabla u),
\end{equation*}
respectively. 
On a compact Riemannian manifold satisfying $\Ric \geq (n-1)\kappa g$ for $\kappa \in \R$, sharp lower bounds for the first nonzero eigenvalue of $\Delta_p$  have been obtained in \cite{Matei00} for $\kappa >0$, \cite{Valtorta12} for $\kappa =0$, and \cite{NV14} for $\kappa <0$, as well as a sharpened result for $\kappa >0$.
We refer the reader to \cite[Theorem 1.3]{LW19eigenvalue} for a unified statement in terms of eigenvalue comparisons with associated one-dimensional models. 

The purpose of this paper is to prove sharp lower bounds for the first nonzero eigenvalue of $\Delta_{p,f}$ on compact Bakry-\'Emery manifolds in terms of dimension, diameter, and lower bound of the Bakry-\'Emery Ricci tensor $\Ric+\nabla^2 f$, thus completing our previous work \cite{LW19eigenvalue} and generalizing the above-mentioned results for compact Riemannian manifolds with Ricci curvature lower bounds. 

Recall that a triple $(M^n,g,f)$ consisting of an $n$-dimensional Riemannian manifold $(M^n,g)$ and a function $f\in C^{\infty}(M)$ is called a Bakry-\'Emery manifold if it satisfies 
$$\Ric +\nabla^2 f \geq \kappa\, g$$ 
for some $\kappa \in \R$. 
Here $\Ric$ denotes the Ricci curvature of $(M,g)$ and $\nabla^2 f$ denotes the Hessian of the function $f$. By taking $f$ to be a constant function, we see that Bakry-\'Emery manifolds include all Riemannian manifolds with a lower bound on the Ricci curvature. 
The tensor $\Ric+\nabla^2 f$ is called the Bakry-\'Emery Ricci tensor and it shares many important properties as the Ricci curvature, see for instance \cite{Lott03} and \cite{WW09}.  

The first nonzero eigenvalue of 
$\Delta_{p,f}$ on a closed Bakry-\'Emery manifold $(M^n,g,f)$, denoted by  $\l_{p,f}$, is defined by
\begin{equation*}\label{lambda def}
    \lambda_{p,f} =\inf \left\{\frac{\int_M |\nabla u|^p e^{-f} d\mu}{\int_M |u|^p e^{-f} d\mu} : u \in W^{1,p}(M, e^{-f}d \mu)\setminus\{0\}, \int_M |u|^{p-2} u \, e^{-f} d \mu =0 \right\}.
\end{equation*}
It was shown in \cite{Tolksdorf84} that this infimum is achieved by an eigenfunction $u \in C^{1,\a}(M)$ satisfying the Euler-Lagrange equation 
\begin{equation*}
    \Delta_{p,f} u =- \l_{p,f} |u|^{p-2} u. 
\end{equation*}
In case $\p M \neq \emptyset$, the Neumann boundary condition $\frac{\p u}{\p \nu}=0$ on $\p M$ is always imposed, where $\nu$ is the outward unit normal vector along $\p M$. In our recent work \cite{LW19eigenvalue}, we obtained sharp lower bound estimates for $\l_{p,f}$ in terms of $p$, $\kappa$, and the diameter of $M$, provided that either $1<p\leq2$ or $\kappa \leq 0$. In this paper, we take care of the remaining case $p>2$ and $\kappa >0$. 

Combined with our results in \cite{LW19eigenvalue}, we prove that
\begin{thm}\label{Thm Main}
Let $(M^n,g,f)$ be a compact Bakry-\'Emery manifold (possibly with $C^2$ convex boundary) with diameter $D$ and $\Ric +\nabla^2 f \geq \kappa \, g$ for $\kappa \in \R$.
For $1<p<\infty$, let $\l_{p,f}$ be the first nonzero eigenvalue of the weighted $p$-Laplacian $\Delta_{p,f}$ (with Neumann boundary conditions if $\partial M \neq \emptyset$).
Then we have
\begin{equation}\label{eigenvalue estimate}
    \lambda_{p,f} \geq \mu_p(\kappa,D),
\end{equation}
where $\mu_p(\kappa,D)$ is the first nonzero Neumann eigenvalue of the one-dimensional eigenvalue problem
\begin{equation}\label{ODE}
    (p-1)|\vp'|^{p-2}\vp'' - \kappa\, t \, |\vp'|^{p-2}\vp' =-\l |\vp|^{p-2}\vp
\end{equation}
on $[-D/2,D/2]$. 
\end{thm}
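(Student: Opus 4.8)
The proof splits according to the sign of $\kappa$ and the size of $p$. When $1<p\le 2$ or $\kappa\le 0$, inequality \eqref{eigenvalue estimate} is exactly the main result of \cite{LW19eigenvalue}, so nothing new is needed; the content of the present paper is the remaining range $p>2$ and $\kappa>0$, which I would treat by the gradient comparison method in the spirit of Kr\"oger, Bakry--Qian, Valtorta and Naber--Valtorta, adapted to the Bakry--\'Emery setting. The plan is: fix a first eigenfunction $u$ with $\Delta_{p,f}u=-\lambda_{p,f}|u|^{p-2}u$ and $\partial u/\partial\nu=0$ on $\partial M$; normalize it (by scaling, and replacing $u$ by $-u$ if necessary) so that its range is comparable with a one-dimensional model; prove a pointwise gradient comparison against that model; and then integrate along a minimizing geodesic.

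For $\bar\lambda$ slightly larger than $\lambda_{p,f}$ let $\varphi=\varphi_{\bar\lambda}$ be the monotone solution of \eqref{ODE} with eigenvalue $\bar\lambda$ on a maximal interval where $\varphi'\ge 0$, normalized so that $\max u$ and $\min u$ lie in the range of $\varphi$. The heart of the argument is the gradient comparison estimate
\begin{equation*}
  |\nabla u|(x)\le \varphi'\big(\varphi^{-1}(u(x))\big)\qquad\text{for all } x\in M.
\end{equation*}
I would prove this by contradiction: if the continuous function $P:=|\nabla u|^2-\big(\varphi'\circ\varphi^{-1}\circ u\big)^2$ were positive somewhere, it would attain a positive maximum at some $x_0$. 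On the open set $\{\nabla u\neq 0\}$ one applies the weighted Bochner formula (writing $\Ric_f=\Ric+\nabla^2 f$)
\[
  \tfrac12\Delta_f|\nabla u|^2=|\nabla^2 u|^2+\langle\nabla u,\nabla\Delta_f u\rangle+\Ric_f(\nabla u,\nabla u),
\]
uses $\Ric_f\ge\kappa g$, bounds $|\nabla^2 u|^2$ from below by a refined Kato/Newton-type inequality carrying the factor $(p-1)$, substitutes the eigenvalue equation for $\Delta_f u$, and invokes the ODE satisfied by $\varphi$; at a maximum of $P$ the first-order condition $\nabla P=0$ and the resulting second-order differential inequality for the linearization of $\Delta_{p,f}$ (locally uniformly elliptic away from critical points) contradict maximality. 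Two points need care: the possibility that $x_0$ lies on the critical set $\{\nabla u=0\}$ — handled by checking $P\le 0$ there directly from the behaviour of $\varphi$ near the ends of its interval — and the possibility that $x_0\in\partial M$, excluded because the Neumann condition together with $C^2$ convexity of $\partial M$ forces $\partial P/\partial\nu\le 0$ on the boundary.

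Finally I would integrate the gradient comparison along a unit-speed minimizing geodesic $\gamma$ joining a minimum point of $u$ to a maximum point of $u$: since $\tfrac{d}{ds}\big(\varphi^{-1}(u(\gamma(s)))\big)=\tfrac{\langle\nabla u,\gamma'\rangle}{\varphi'\circ\varphi^{-1}\circ u}\le 1$, the length of the interval on which $\varphi$ is defined is at most $L(\gamma)\le D$; letting $\bar\lambda\downarrow\lambda_{p,f}$ and using continuity and strict monotonicity of the first nonzero Neumann eigenvalue of \eqref{ODE} in the length of the interval, one obtains $\lambda_{p,f}\ge\mu_p(\kappa,D)$. I expect the main obstacle to be the gradient comparison step in the regime $p>2$, $\kappa>0$: here the Bochner error terms carry the unfavorable sign, so one must extract just enough from the $(p-1)$-weighted Newton inequality together with the precise monotonicity and concavity properties of the model solution $\varphi_{\bar\lambda}$ — in particular the behaviour of $\varphi'$ near the endpoints and the dependence of the interval length on $\bar\lambda$ — to close the inequality; establishing these ODE properties, and the continuity and strict monotonicity of $\mu_p(\kappa,\cdot)$, is itself a nontrivial sub-task.
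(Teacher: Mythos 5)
Your overall strategy --- a pointwise gradient comparison against a monotone one-dimensional model proved by a maximum principle with a $p$-Bochner formula, followed by integration along a minimizing geodesic --- is exactly the route the paper takes, and your treatment of the boundary point (Neumann condition plus convexity forcing $\partial P/\partial\nu\le 0$) and of the critical set matches the paper's. However, there is a genuine gap precisely in the regime you single out, $\kappa>0$: you take for granted that a model solution $\varphi_{\bar\lambda}$ of \eqref{ODE} exists that is increasing on a \emph{finite maximal} interval with Neumann data at both ends and whose range contains $[\min u,\max u]$. For $\kappa>0$ the ODE \eqref{ODE} on the whole real line (with Gaussian weight $e^{-\kappa s^2/2}$) has a \emph{positive} first nonzero eigenvalue $\lambda_0$, and if $\lambda_{p,f}\le\lambda_0$ the monotone solution never turns around, so no such finite-interval model exists and the final step ``interval length $\le D$, hence $\lambda\ge\mu_p(\kappa,D)$'' cannot be run. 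The paper must therefore first establish the a priori bound $\lambda_{p,f}>\lambda_0$ (Proposition \ref{a priori bound}); this is not automatic --- it is itself deduced from the gradient comparison theorem applied to the globally increasing solutions of Lemma \ref{lemma 3.1} together with the finiteness of the diameter. Your proposal never identifies this step, and without it the argument does not close when $\kappa>0$.

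A second, smaller omission: even once $\lambda_{p,f}>\lambda_0$, the eigenfunction $u$ normalized by $\min u=-1$ generally has $\max u<1$, so one must show that within the one-parameter family of models $w_a$ (with $w_a(a)=-1$, $w_a'(a)=0$) the attained maximum $m(a)$ sweeps out all of $(0,1]$ (Propositions \ref{prop symmetric} and \ref{prop range}), and --- crucially --- that the length $\delta(a)$ of the resulting \emph{asymmetric} model interval is minimized by the symmetric (odd) model, $\delta(a)\ge\delta(-\bar a)$ (Proposition \ref{prop central}). Only the latter inequality lets you pass from ``$D\ge\delta(a)$'' to the symmetric Neumann problem on $[-D/2,D/2]$ that defines $\mu_p(\kappa,D)$. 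Your closing sentence gestures at ``the dependence of the interval length on $\bar\lambda$'' but does not isolate this comparison between asymmetric and symmetric models, which is a separate and necessary lemma. (Minor technical remarks: the paper's maximum principle is run on $Z=|\nabla u|^p-(\varphi')^p$ using the Bochner formula for the linearization $P^{II}_{u,f}$ applied to $|\nabla u|^p$, not on $|\nabla u|^2$ via $\Delta_f$; and the strict contradiction is obtained by perturbing $\kappa$ to $\kappa-\eps$ in the model rather than perturbing the eigenvalue. These are differences of implementation, not of substance.)
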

When $\kappa =0$, the ODE \eqref{ODE} can be solved explicitly (see \cite{DP05} or \cite{Valtorta12}) and we have in particular that 
\begin{equation*}
    \mu_{p}(0, D) = (p-1)\left(\frac{\pi_p}{D} \right)^p, \text{ where } \pi_p =\frac{2\pi}{ p \sin (\pi/p)}.
\end{equation*}
Therefore, we see that Theorem \ref{Thm Main} reduces to the sharp lower bound of first nonzero eigenvalue of the Laplacian by Zhong and Yang \cite{ZY84} if $p=2$ and $f\equiv 0$, and to the sharp lower bound of first nonzero eigenvalue of the $p$-Laplacian by Valtorta \cite{Valtorta12} if $f \equiv 0$. Moreover, when $\kappa =0$, the equality in \eqref{eigenvalue estimate} is achieved when $M$ is one-dimensional circle (when $M$ has no boundary) or the line segment $[-D/2, D/2]$ (when $M$ has boundary).  

For general $\kappa \in \R$, 
the $p=2$ case of Theorem \ref{Thm Main} was due to Bakry and Qian \cite{BQ00}. Andrews and Ni \cite{AN12} gave a simple alternative proof using the modulus of continuity approach and they also demonstrated the sharpness in all dimensions by constructing a sequence of Riemannian manifolds collapsing to the interval $[-D/2,D/2]$. We refer the reader to \cite{AC13, BQ00, LW19eigenvalue, NV14} and the references therein for more historical developments and other related results.

In \cite{LW19eigenvalue}, we used the method of modulus of continuity estimates to give a simple proof of Theorem \ref{Thm Main} for $1<p \leq 2$ and $\kappa \in \R$. 
Recent years have witnessed the great success of this approach and sharp eigenvalues estimates were obtained in \cite{AC11, AC13, AN12, Ni13, SWW19, WZ17}. However, it seems difficult to handle the $p>2$ case with this method, as pointed out in the wonderful survey by Andrews \cite[Section 8]{Andrewssurvey15}.  
The proof presented below uses the classical gradient estimates method initiated by Li \cite{Li79} and Li and Yau \cite{LY80}, which indeed works for all $1<p<\infty$ and $\kappa \in \R$. This approach has been used successfully by various authors to estimate eigenvalues, including \cite{BQ00, Kroger92, Li79, LY80, LW19eigenvalue, NV14, Valtorta12, ZY84}.

In order to handle the $\kappa >0$ case, we have to overcome two difficulties in this paper. 
The sharp gradient comparison theorem (see Theorem \ref{Thm grad comp}), which is the most important ingredient, was only proved for $\kappa \leq 0$ in \cite{LW19eigenvalue} using a two-point maximum principle argument. 
In the present paper, we use a Bochner type formula for the weighted $p$-Laplacian (see Proposition \ref{Bochner}) to prove the sharp gradient comparison theorem.  
Despite its technicality, this approach works regardless of the sign of $\kappa$, and the value of $p \in (1,\infty)$. 
The second key estimate that we shall establish in this paper is Proposition \ref{a priori bound}, which says that $\l_{p,f}$ is bounded from below by the first nonzero eigenvalue of the eigenvalue problem \eqref{ODE} on the real line. 
This bound is trivial when $\kappa \leq 0$ (the first eigenvalue of the real line is zero in this case), but it is nontrivial when $\kappa >0$ and turns out to be a consequence of the sharp gradient comparison theorem and the compactness of the manifold. 
Once we have these two key ingredients, the rest of proof consists of careful analysis of the one-dimensional models and standard comparison arguments. 

The paper is organized as follows. We derive the Bochner formula for $\Delta_{p,f}$ in Section 2 and use it to prove the sharp gradient comparison theorem in Section 3. 
In Section 4, we study the qualitative behavior of solutions to the ODE \eqref{ODE} and construct one-dimensional models for the purpose of applying the gradient comparison theorem effectively. 
The proof of Theorem \ref{Thm Main} is then given in Section 5. Finally, examples are constructed in Section 6 to demonstrate the sharpness of Theorem \ref{Thm Main}.

\section{Bochner Formula for the Weighted $p$-Laplacian}
The goal of this section is to establish the Bochner formula for $\Delta_{p,f}$ in Proposition \ref{Bochner}, which will play a key role in proving the sharp gradient comparison theorem in Section 3. This is a slight extension of \cite[Section 3]{Valtorta12}, where the author proves the Bochner formula for the $p$-Laplacian.

Let $H_u$ denote the Hessian of $u$ and set 
$$A_u =\frac{H_u(\nabla u, \nabla u)}{|\nabla u|^2}$$ whenever $|\nabla u|\neq 0$. 
The linearization of $\Delta_{p,f}$ near a function $u$ is given by 
\begin{eqnarray*}
P_{u,f}(\eta)&:=& \left.\frac{d}{dt}\right|_{t=0} \Delta_{p,f}(u+t\eta) \\
&=& |\nabla u|^{p-2} \Delta_f \eta +(p-2)|\nabla u|^{p-4}H_\eta(\nabla u, \nabla u) +(p-2)\Delta_{p,f} u \frac{\langle \nabla u, \nabla \eta \rangle}{|\nabla u|^2} \\
&& + 2(p-2)|\nabla u|^{p-4} H_u\left(\nabla u, \nabla \eta -\frac{\nabla u}{|\nabla u|} \left\langle \frac{\nabla u}{|\nabla u|}, \nabla \eta \right\rangle \right) 
\end{eqnarray*}
wherever $|\nabla u|\neq 0$. It is easy to see that this operator is strictly elliptic where the gradient of $u$ does not vanish. 
Let $P^{II}_{u,f}(\eta)$ be defined by 
$$P^{II}_{u,f}(\eta):= |\nabla u|^{p-2} \Delta_f \eta +(p-2)|\nabla u|^{p-4}H_\eta(\nabla u, \nabla u).$$

\begin{prop}[Bochner formula for $\Delta_{p,f}$]\label{Bochner}
Let $x\in M$ and $U\subset M$ be an open set containing $x$. For any function $u\in C^3(U)$ with $\nabla u \neq 0$ on $U$, we have at $x$, 
\begin{eqnarray*}
\frac{1}{p}P^{II}_{u,f}(|\nabla u|^p) &=&  |\nabla u|^{p-2} \langle \nabla \Delta_{p,f} u, \nabla u \rangle -(p-2)|\nabla u|^{p-2} A_u \Delta_{p,f}u \\
&& +|\nabla u|^{2(p-2)} \left(|H_u|^2 +p(p-2)A_u^2 +\Ric_f(\nabla u,\nabla u)  \right),
\end{eqnarray*}
where $\Ric_f =\Ric +\nabla^2 f.$
\end{prop}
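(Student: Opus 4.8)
The plan is to reduce the statement to the classical Bakry-\'Emery Bochner formula
\[
\tfrac{1}{2}\Delta_f|\nabla u|^2=|H_u|^2+\langle\nabla\Delta_f u,\nabla u\rangle+\Ric_f(\nabla u,\nabla u),
\]
which one gets from the usual Bochner identity by writing $\Delta_f u=\Delta u-\langle\nabla f,\nabla u\rangle$ and collecting the terms produced by the drift into $\nabla^2 f(\nabla u,\nabla u)$. I would work at a point where $\nabla u\neq 0$, put $w:=|\nabla u|^2$, and apply the chain rule to $|\nabla u|^p=w^{p/2}$: using the definition of $P^{II}_{u,f}$, this writes $\tfrac1p P^{II}_{u,f}(|\nabla u|^p)$ as an explicit combination of $w^{p-2}\Delta_f w$, $w^{p-3}|\nabla w|^2$, $w^{p-3}H_w(\nabla u,\nabla u)$ and $w^{p-2}A_u^2$ with constants depending only on $p$. (For $p=2$ the assertion is just the Bakry-\'Emery Bochner formula, so one may assume $p\neq 2$.)

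The auxiliary identities feeding this expansion are $\langle\nabla w,\nabla u\rangle=2A_u w$, $|\nabla w|^2=4|H_u\nabla u|^2$ and, after differentiating $w$ twice and using the symmetry of the Hessian, $H_w(\nabla u,\nabla u)=2\,\nabla^3 u(\nabla u,\nabla u,\nabla u)+2|H_u\nabla u|^2$; the curvature term from the Ricci commutation formula drops out here because $\Rm(\nabla u,\nabla u,\cdot,\cdot)=0$ by antisymmetry. The one genuinely structural identity, obtained by differentiating $A_u=H_u(\nabla u,\nabla u)/w$ once and contracting with $\nabla u$, is
\[
w\,\langle\nabla A_u,\nabla u\rangle=\nabla^3 u(\nabla u,\nabla u,\nabla u)+2|H_u\nabla u|^2-2w A_u^2 .
\]

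To bring in $\Delta_{p,f}u$ I would use $\Delta_{p,f}u=|\nabla u|^{p-2}\bigl(\Delta_f u+(p-2)A_u\bigr)$; differentiating this and contracting with $\nabla u$ (again via $\langle\nabla w,\nabla u\rangle=2A_u w$) gives
\[
|\nabla u|^{p-2}\langle\nabla\Delta_f u,\nabla u\rangle=\langle\nabla\Delta_{p,f}u,\nabla u\rangle-(p-2)A_u\Delta_{p,f}u-(p-2)|\nabla u|^{p-2}\langle\nabla A_u,\nabla u\rangle .
\]
Then I substitute the Bakry-\'Emery Bochner formula for $\Delta_f w$ into the expansion of $\tfrac1p P^{II}_{u,f}(|\nabla u|^p)$ and replace $|\nabla u|^{p-2}\langle\nabla\Delta_f u,\nabla u\rangle$ by the right-hand side above: the first two terms $|\nabla u|^{p-2}\langle\nabla\Delta_{p,f}u,\nabla u\rangle-(p-2)|\nabla u|^{p-2}A_u\Delta_{p,f}u$ of the asserted formula come out immediately, and the remainder is a combination of $|H_u|^2$, $\Ric_f(\nabla u,\nabla u)$, $A_u^2$, $|H_u\nabla u|^2$, $\nabla^3 u(\nabla u,\nabla u,\nabla u)$ and $\langle\nabla A_u,\nabla u\rangle$ with powers of $|\nabla u|$ as coefficients. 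Using the structural identity to eliminate $\langle\nabla A_u,\nabla u\rangle$ and $\nabla^3 u(\nabla u,\nabla u,\nabla u)$ collapses this remainder to precisely $|\nabla u|^{2(p-2)}\bigl(|H_u|^2+p(p-2)A_u^2+\Ric_f(\nabla u,\nabla u)\bigr)$, completing the proof.

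The computation is routine apart from careful bookkeeping of the powers of $|\nabla u|$ and the numerical constants; the step I expect to be the real point is the structural identity for $w\langle\nabla A_u,\nabla u\rangle$, which is what makes the third-derivative term and the spurious $\langle\nabla A_u,\nabla u\rangle$ cancel and leaves the clean coefficient $p(p-2)$ multiplying $A_u^2$. Curvature should cause no trouble, since it enters only through the Bochner formula (already packaged as $\Ric_f$) and through the commutator term that dies upon contraction with $\nabla u\otimes\nabla u$.
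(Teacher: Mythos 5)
Your proof is correct, but it takes a genuinely different reduction from the paper's. The paper starts from Valtorta's Bochner formula for the \emph{unweighted} $p$-Laplacian (\cite[Proposition 3.1]{Valtorta12}), substitutes $\Delta_p u = \Delta_{p,f}u + |\nabla u|^{p-2}\langle\nabla u,\nabla f\rangle$ and $\Ric = \Ric_f - \nabla^2 f$, and observes that the resulting drift terms collapse into $\tfrac1p|\nabla u|^{p-2}\langle\nabla|\nabla u|^p,\nabla f\rangle$, which is exactly the difference between $P^{II}_u$ and $P^{II}_{u,f}$; the whole proof is three lines of bookkeeping in the weight $f$, with all the $p$-dependence outsourced to the cited result. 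You instead start from the weighted $p=2$ Bochner formula $\tfrac12\Delta_f|\nabla u|^2 = |H_u|^2 + \langle\nabla\Delta_f u,\nabla u\rangle + \Ric_f(\nabla u,\nabla u)$ and redo the full chain-rule expansion of $P^{II}_{u,f}(w^{p/2})$ with $w=|\nabla u|^2$ — in effect reproving Valtorta's computation in the weighted setting. I checked your expansion: the coefficients are $\tfrac1pP^{II}_{u,f}(|\nabla u|^p) = \tfrac12 w^{p-2}\Delta_f w + \tfrac{p-2}{4}w^{p-3}|\nabla w|^2 + \tfrac{p-2}{2}w^{p-3}H_w(\nabla u,\nabla u) + (p-2)^2 w^{p-2}A_u^2$, your auxiliary identities and the structural identity for $w\langle\nabla A_u,\nabla u\rangle$ are all correct, the $\nabla^3u(\nabla u,\nabla u,\nabla u)$ and $|H_u\nabla u|^2$ terms cancel exactly as you predict, and the surviving $A_u^2$ coefficient is $2(p-2)+(p-2)^2 = p(p-2)$ as required. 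Your route is longer but self-contained in the $p$-direction; the paper's is shorter but leans on the unweighted $p$-Laplacian formula as a black box. Either is acceptable; if you wanted to match the paper's economy you could note that your argument specialized to $f\equiv 0$ is precisely the proof of the cited Valtorta formula.
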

\begin{proof}
Let $P^{II}_u(\eta):= |\nabla u|^{p-2} \Delta \eta +(p-2)|\nabla u|^{p-4}H_\eta(\nabla u, \nabla u)$ be the second order part of $P_{u,f}(\eta)$. The Bochner formula for the $p$-Laplacian, shown in \cite[Proposition 3.1]{Valtorta12}, states that 
\begin{eqnarray*}
\frac{1}{p}P^{II}_{u}(|\nabla u|^p) 
&=&  |\nabla u|^{p-2} \langle \nabla \Delta_{p} u, \nabla u \rangle -(p-2)|\nabla u|^{p-2} A_u \Delta_{p}u \\
&& +|\nabla u|^{2(p-2)} \left(|H_u|^2 +p(p-2)A_u^2 +\Ric(\nabla u,\nabla u)  \right).
\end{eqnarray*}
We then compute, using the identities $\Delta_p u = \Delta_{p,f} u +|\nabla u|^{p-2} \langle \nabla u, \nabla f \rangle$ and $\Ric =\Ric_f -\nabla^2 f$, that 
\begin{eqnarray*}
\frac{1}{p}P^{II}_{u}(|\nabla u|^p)  &=& 
|\nabla u|^{p-2} \langle \nabla \Delta_{p,f} u, \nabla u \rangle -(p-2)|\nabla u|^{p-2} A_u \Delta_{p,f}u \\
&& +|\nabla u|^{2(p-2)} \left(|H_u|^2 +p(p-2)A_u^2 +\Ric_f(\nabla u,\nabla u)  \right) \\
&& + |\nabla u|^{p-2} \left\langle \nabla (|\nabla u|^{p-2} \langle \nabla u, \nabla f \rangle), \nabla u \right\rangle \\
&& -(p-2)|\nabla u|^{2(p-2)} A_u \langle \nabla u, \nabla f \rangle \\
&& -|\nabla u|^{2(p-2)} \left(\nabla^2 f (\nabla u,\nabla u)  \right) \\
&=& |\nabla u|^{p-2} \langle \nabla \Delta_{p,f} u, \nabla u \rangle -(p-2)|\nabla u|^{p-2} A_u \Delta_{p,f}u \\
&& +|\nabla u|^{2(p-2)} \left(|H_u|^2 +p(p-2)A_u^2 +\Ric_f(\nabla u,\nabla u)  \right) \\
&& +\frac{1}{p} |\nabla u|^{p-2} \langle \nabla |\nabla u|^p, \nabla f\rangle.
\end{eqnarray*}
The desired formula then follows from 
\begin{eqnarray*}
\frac{1}{p}P^{II}_{u,f}(|\nabla u|^p)  &=& \frac{1}{p}P^{II}_{u}(|\nabla u|^p) - \frac{1}{p}|\nabla u|^{p-2} \langle \nabla f, \nabla |\nabla u|^p \rangle.
\end{eqnarray*}
\end{proof}

\section{The Gradient Comparison Theorem}

In this section, we use the Bochner formula for $\Delta_{p,f}$ proved in Proposition \ref{Bochner} to derive sharp gradient comparison theorems for eigenfunctions of $\Delta_{p,f}$. We emphasis that the proof works for all $1<p<\infty$ and $\kappa \in \R$. 

\begin{thm}\label{Thm grad comp}
Let $(M^n,g,f)$ be a compact Bakry-\'Emery manifold (possibly with $C^2$ convex boundary) satisfying $\Ric +\nabla^2 f \geq \kappa\, g$ for some $\kappa \in \R$. 
Let $u$ be a solution of 
\begin{equation}
    \Delta_{p,f} u=-\l |u|^{p-2} u,
\end{equation}
normalized so that $-1 =\min \{u\} < 0 < \max \{u\} \leq 1 $ (in case $\p M \neq \emptyset$, we assume that $u$ satisfies Neumann boundary condition). 
Suppose $w:[a,b] \to \R$ is a solution of the one-dimensional equation:
\begin{equation}
    (p-1)|w'|^{p-2}w'' -\kappa\, t  |w'|^{p-2}w' =-\lambda |w|^{p-2} w
\end{equation}
which is strictly increasing on $[a,b]$, and such that the range of $u$ is contained in $[w(a),w(b)]$. Then we have for all $x\in M$, 
\begin{equation*}
    |\nabla u(x)| \leq w'\left(w^{-1}(u(x)) \right).
\end{equation*}
\end{thm}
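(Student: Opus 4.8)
The plan is to argue by contradiction using a maximum-principle argument based on the Bochner formula of Proposition \ref{Bochner}. Define the comparison function
\[
  \Phi(x) := |\nabla u(x)| - w'\!\left(w^{-1}(u(x))\right)
\]
(with the convention that $w^{-1}(u(x))$ makes sense since $u$ takes values in $[w(a),w(b)]$ and $w$ is strictly increasing), and suppose for contradiction that $\max_M \Phi > 0$. The idea is to show that at an interior maximum point $x_0$ of $\Phi$ (one must first rule out the boundary using the convexity of $\partial M$ and the Neumann condition, as in \cite{LW19eigenvalue}), the linearized operator $P_{u,f}$ applied to an appropriate function built from $\Phi$ is both $\leq 0$ (by the second derivative test, since $P_{u,f}$ is strictly elliptic where $\nabla u \neq 0$) and $> 0$ (by a direct computation using the Bochner formula and the ODE satisfied by $w$), which is the desired contradiction. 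A standard device is needed to handle the locus where $\nabla u = 0$: either perturb $u$ slightly, or observe that near such points the comparison is automatic because $w'$ is bounded below away from where $w'=0$; more carefully, one works on the open set where $\nabla u \neq 0$ and checks the estimate extends by continuity.

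The computational heart is as follows. On the set $\{\nabla u \neq 0\}$, write $t = w^{-1}(u)$ so that $|\nabla u| \leq w'(t)$ is what we want. Let $F = |\nabla u|^p$ and compute $P^{II}_{u,f}(F)$ using Proposition \ref{Bochner}: the right-hand side involves $\langle \nabla \Delta_{p,f} u, \nabla u\rangle$, which we evaluate by differentiating the eigenvalue equation $\Delta_{p,f} u = -\lambda |u|^{p-2} u$ to get $\langle \nabla \Delta_{p,f} u, \nabla u \rangle = -\lambda (p-1)|u|^{p-2}|\nabla u|^2$. Using the curvature hypothesis $\Ric_f(\nabla u, \nabla u) \geq \kappa |\nabla u|^2$ and the elementary inequality $|H_u|^2 + p(p-2)A_u^2 \geq (1 + (p-1)\,?\,)A_u^2$ — more precisely the sharp lower bound $|H_u|^2 \geq A_u^2 + \frac{(\Delta_{p,f}u - \text{(transverse trace)})^2}{n-1}$ type estimate used by Valtorta, retaining only the term $(p-1)^2 A_u^2$ coming from the direction of $\nabla u$ — one gets a differential inequality for $F$ that mirrors, via the substitution encoded by $w$, the ODE $(p-1)|w'|^{p-2}w'' - \kappa t |w'|^{p-2}w' = -\lambda|w|^{p-2}w$. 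The point is that $w$ is constructed precisely so that equality holds in the one-dimensional model, so the function $\Phi$ is a subsolution of the linearized operator at its positive maximum, forcing $P_{u,f}(\Phi)(x_0) > 0$ while ellipticity and the maximum condition give $P_{u,f}(\Phi)(x_0) \leq 0$.

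I expect the main obstacle to be twofold. First, organizing the Bochner computation so that the error terms — in particular those involving $A_u$ and the off-diagonal part of $H_u$ — are absorbed with the correct signs; this requires the sharp Cauchy–Schwarz-type inequality $|H_u|^2 \geq \frac{(\Delta u)^2}{n}$ refined to isolate the $\nabla u$-direction, together with the fact that only the coefficient $(p-1)$ (not $n$) survives, which is why the one-dimensional model ODE has $(p-1)$ in front of $w''$ rather than a dimensional constant. Second, the regularity issue: the eigenfunction $u$ is only $C^{1,\alpha}$ globally, so the Bochner formula (which needs $u \in C^3$) is only available on the open set where $\nabla u \neq 0$, and one must argue that the critical set does not obstruct the maximum-principle argument — the standard way is to note that the maximum of $\Phi$, if positive, cannot be attained on $\{\nabla u = 0\}$ because there $|\nabla u| = 0 < w'(w^{-1}(u))$ as long as $w' > 0$ on the relevant range, which holds by the strict monotonicity hypothesis on $[a,b]$ after possibly shrinking to a closed subinterval. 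Handling the endpoints where $w'$ may degenerate, and the coupling with the a priori bound of Proposition \ref{a priori bound} to ensure $[a,b]$ can be chosen appropriately, is where care is needed; but the barrier construction in Section 4 is designed to make this clean.
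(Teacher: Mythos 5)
Your overall strategy --- a maximum-principle argument at an extremum of a comparison function, powered by the Bochner formula of Proposition \ref{Bochner}, with the boundary case handled via convexity and the critical set of $u$ ruled out because $w'>0$ there --- is exactly the paper's approach. But there is a genuine gap at the crux of the argument: you assert that at the positive maximum of $\Phi$ one gets $P_{u,f}(\Phi)(x_0)>0$, ``forcing'' a contradiction with ellipticity, without identifying where the \emph{strict} inequality comes from. Since $w$ satisfies the model ODE with equality and every geometric input is a non-strict inequality ($\Ric_f\geq\kappa g$, $|H_u|^2\geq A_u^2$), the computation at the maximum point yields only $0\geq 0$. Moreover, with your choice of $\Phi=|\nabla u|-w'(w^{-1}(u))$ having a \emph{positive} maximum, you cannot substitute $|\nabla u(x_0)|=w'(t_0)$ into the second-derivative inequality, which is precisely the substitution needed to make the terms recombine into the ODE for $w$ and its derivative.

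The paper resolves both issues with two devices you do not mention. First, it replaces $w$ by the solution $w_\eps$ of the ODE with $\kappa$ replaced by $\kappa-\eps$; the extra $\eps$ survives the cancellation and produces the strictly positive term $\eps(\varphi')^{2p-2}$ that yields the contradiction $0\geq\eps(\varphi')^{2p-2}>0$, after which one lets $\eps\to0$. Second, instead of a positive maximum of $\Phi$, it considers the scaled family $Z_c=|\nabla u|^p-\left.(cw')^p\right|_{(cw)^{-1}(u)}$ and takes $c_0=\inf\{c\geq1: Z_c\leq0\}$, so that $Z_{c_0}$ attains a \emph{zero} maximum at $x_0$; this touching-from-below normalization is what legitimizes substituting $|\nabla u(x_0)|^p=(\varphi'(t_0))^p$ and $u(x_0)=\varphi(t_0)$. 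Two further points you gloss over: the degeneracy of $w'$ at the endpoints is handled by first replacing $u$ by $\alpha u$ and letting $\alpha\to1^-$ (so the range of $u$ sits strictly inside $(w(a),w(b))$), and in the low-regularity case $1<p<2$ with $u(x_0)=0$ the function $u$ is only $C^{2,\alpha}$, so $P^{II}_{u,f}(Z_{c_0})$ contains two individually divergent terms that must be shown to cancel exactly (the paper's Remark \ref{Rmk regularity}); restricting to $\{\nabla u\neq0\}$ alone does not dispose of this.
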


\begin{proof}
\textbf{Case 1:} $\p M = \emptyset$. \\
We may assume that $[\min\{u\} , \max \{u\}] \subset (w(a), w(b))$, because otherwise we can replacing $u$ by $\a u$ and then letting $\a \to 1^{-}$.
Let $w:=w_{\eps}(t)$ be the solution of the initial value problem
\begin{equation}\label{IVP8.1}
    \begin{cases}
    (p-1)|w'|^{p-2}w'' -(\kappa -\eps) \, t |w'|^{p-2}w'+\lambda |w|^{p-2}w =0, \\
    w(a)=-1, w'(a)=0,
    \end{cases}
\end{equation}
which is strictly increasing on $[a, b_\eps]$ with $\lim_{\eps \to 0} b_{\eps} =b$. 
For $\eps >0$ sufficiently small, we still have that 
$ [\min \{u \} , \max\{ u\} ] \subset (w(a), w(b_\eps)) $. 

Let $c_0 \geq 1$ be the number defined by 
\begin{equation*}
    c_0=\inf\{ c \geq 1:  Z_c(x) =|\nabla u(x)|^p - \left. (cw')^p \right|_{ (cw)^{-1}(u(x)) } \leq 0 \text{ on } M\}.
\end{equation*}
Clearly $c_0$ is finite since $Z_c(x)$ approaches negative infinity uniformly as $c$ approaches infinity. By continuity, there exists $x_0 \in M$ such that 
$$0=Z_{c_0}(x_0) =\max_{x\in M}Z_{c_0}(x).$$
For simplicity of notations, we use $t=(c_0w)^{-1}(u(x))$ as an intermediate variable and write $\vp(t) =(c_0w)(t)$. 
Thus the function $Z_{c_0}(x) = |\nabla u|^p -(\vp')^p$ attains zero maximum at $x_0$. 
Note that $|\nabla u(x_0)| \neq 0$ since $\vp$ has positive derivative.

Hereafter, we assume that $u\in C^{3}(U)$ for some open neighborhood $U$ of $x_0$. This is certainly the case if $u(x_0)\neq 0$ or if $p\geq 2$, as pointed out in Remark \ref{rmk regularity}. In case $1<p<2$ and $u(x_0)=0$, then $u$ has only $C^{2,\a}$ regularity near $x_0$. However, this regularity issue is not an obstacle to the argument, as we will explain in Remark \ref{Rmk regularity}. Here and below, all the derivatives of $\vp$ are evaluated at $t_0=(c_0w)^{-1}(u(x_0))$. 

The first derivative test implies that $\nabla Z_{c_0}(x_0) =0$, which produces the following identity at $x_0$,
\begin{equation*}
    p|\nabla u|^{p-2}H_u \nabla u =\frac{p}{p-1} \L_p \vp \nabla u,
\end{equation*}
where $\L_p \vp := (p-1)(\vp')^{p-2}\vp''$ is the one-dimensional $p$-Laplacian. 
In particular, we have  
\begin{equation}\label{eq8.2}
    (p-1)|\nabla u|^{p-2} A_u =\L_p \vp. 
\end{equation}

Next we calculate and estimate the second derivatives. 
Using Proposition \ref{Bochner}, we obtain that at $x_0$,
\begin{eqnarray}\label{eq8.3} \nonumber
\frac{1}{p} P^{II}_{u,f}(|\nabla u|^p) &=& |\nabla u|^{p-2} \langle \nabla \Delta_{p,f} u, \nabla u \rangle -(p-2)|\nabla u|^{p-2} A_u \Delta_{p,f}u \\ \nonumber
&& +|\nabla u|^{2(p-2)} \left(|H_u|^2 +p(p-2)A_u^2 +\Ric_f(\nabla u,\nabla u)  \right) \\ \nonumber
&\geq & -\l |\nabla u|^{p-2} \langle \nabla \left( |u|^{p-2} u \right), \nabla u \rangle -(p-2)\l |u|^{p-2}u |\nabla u|^{p-2} A_u  \\ \nonumber
&& +|\nabla u|^{2(p-2)} \left((p-1)^2A_u^2 +\kappa |\nabla u |^2  \right) \\ \nonumber
&= & -(p-1) \l u^{p-2} |\nabla u|^{p} -\frac{p-2}{p-1}\l |u|^{p-2}u \L_p\vp \\
&& + (\L_p\vp)^2 +\kappa |\nabla u |^{2p-2},
\end{eqnarray}
where we have used $|H_u|^2 \geq A_u^2$ and $\Ric_f(\nabla u, \nabla u) \geq \kappa |\nabla u|^2$ in the inequality, and \eqref{eq8.2} in getting the last equality. 

On the other hand, direct calculation shows 
\begin{eqnarray*}
\frac{1}{p}\nabla (\vp')^p &=& \frac{1}{p-1}\L_p\vp \nabla u,\\
\frac{1}{p} \Delta_f  (\vp')^p &=& \frac{1}{p-1} \L_p\vp \Delta_f u +\frac{1}{p-1} \frac{d}{dt} (\L_p\vp) \frac{1}{\vp'} |\nabla u|^2,\\
\frac{1}{p} H_{(\vp')^p} (\nabla u, \nabla u) &=& \frac{1}{p-1}\L_p\vp A_u |\nabla u|^2 + \frac{1}{p-1} \frac{d}{dt} (\L_p\vp) \frac{1}{\vp'} |\nabla u|^4
\end{eqnarray*}
Putting the above identities together, we obtain that 
\begin{eqnarray}\label{eq8.4} \nonumber
\frac{1}{p} P^{II}_{u,f}((\vp')^p) 
&=&\frac{1}{p}|\nabla u|^{p-2} \Delta_f (\vp')^p +\frac{1}{p}(p-2)|\nabla u|^{p-4}H_{(\vp')^p} (\nabla u, \nabla u) \\ \nonumber 
&=& \frac{\L_p\vp}{p-1} |\nabla u|^{p-2}\left(\Delta_f u +A_u \right) +|\nabla u|^p \frac{d}{dt} (\L_p\vp)\frac{1}{\vp'} \\
&=& \frac{-\l |u|^{p-2} u}{p-1} \L_p\vp +|\nabla u|^p \frac{d}{dt} (\L_p\vp)\frac{1}{\vp'},
\end{eqnarray}
where we have used $|\nabla u|^{p-2}\left(\Delta_f u +A_u \right) =\Delta_{p,f} u =-\l |u|^{p-2} u$ in the last equality. 

The second derivative test implies $P^{II}_{u,f} (Z_{\bar{c}}) \leq 0 $ at $x_0$, thus we have by \eqref{eq8.3} and \eqref{eq8.4} that 
\begin{eqnarray}\label{eq8.5} \nonumber
0 &\geq& \frac{1}{p} P^{II}_{u,f} (Z_{\bar{c}}) = \frac{1}{p} P^{II}_{u,f}(|\nabla u|^p) - \frac{1}{p} P^{II}_{u,f}((\vp')^p)\\ \nonumber
&\geq & -(p-1) \l u^{p-2} |\nabla u|^{p} -\frac{p-2}{p-1}\l |u|^{p-2}u \L_p\vp \\ 
&& + (\L_p\vp)^2 +\kappa |\nabla u |^{2p-2} + \frac{\l |u|^{p-2} u}{p-1} \L_p\vp - |\nabla u|^p \frac{d}{dt} (\L_p\vp)\frac{1}{\vp'}.
\end{eqnarray}
Substituting $|\nabla u(x_0)|^p =(\vp'(t_0))^p $ and $u(x_0) =\vp(t_0)$ into \eqref{eq8.5}
gives that at $t_0$, 
\begin{eqnarray}\label{eq8.6} \nonumber
0 &\geq & -(p-1) \l \vp^{p-2} (\vp')^{p} +\frac{(p-2)}{p-1} \l |\vp|^{p-2}\vp \L_p \vp \\ \nonumber
&& +(\L_p\vp)^2 +\kappa (\vp')^{2p-2}   + \frac{\l |\vp|^{p-2}\vp}{p-1} \L_p\vp - (\vp')^{p-1} \frac{d}{dt} (\L_p\vp)\\ \nonumber
&\geq & \L_p\vp \left(\L_p \vp -(\kappa-\eps) t (\vp')^{p-1} +\l |\vp|^{p-2}\vp \right) + \eps (\vp')^{2p-2}  \\ 
&&  - (\vp')^{p-1} \left(\frac{d}{dt} (\L_p\vp)
+  (p-1) \l \vp^{p-2} \vp' - (\kappa -\eps)t \, \L_p \vp - (\kappa -\eps) (\vp')^{p-1} \right) 
\end{eqnarray}
Since $\vp$ satisfies the ODE
$$\L_p \vp -(\kappa -\eps) t (\vp')^{p-1} + \l |\vp|^{p-2}\vp =0,$$
it follows that $\L_p\vp$ satisfies  
$$\frac{d}{dt} (\L_p\vp)-(\kappa -\eps) (\vp')^{p-1} -(\kappa -\eps) t \, \L_p\vp +(p-1)\l \vp^{p-2}\vp' =0.$$
We then easily get from \eqref{eq8.6} that 
$$ 0 \geq \eps (\vp')^{2p-2} > 0, $$
which is clearly a contradiction. 
The desired gradient estimates follows immediately since the solutions $w_\eps$ converges in $C^1$ to the solution $w$ as $\eps \to 0$. 

\textbf{Case 2:} $\p M \neq \emptyset$ and $u$ satisfies the Neumann boundary condition. \\
The proof for Case 1 remains valid as long as $x_0$ is in the interior of $M$.   
If $x_0 \in \p M$, we follow the argument in \cite[Lemma 18]{NV14} to show that $\nabla Z_{c_0}(x_0) =0$. 
Once this is established, it then follows that $P^{II}_{u,f} (Z_{c_0}(x_0)) \leq 0$ and the rest of the proof proceeds as in Case 1. 
Thus it suffices to show the following claim.

\textbf{Claim: } The equation $\nabla Z_{c_0}(x_0) =0$ remains valid even if $x_0 \in \p M$.  
\begin{proof}[Proof of Claim]
Let $\nu$ be the outward unit normal vector field of $\p M$. Since $Z_{c_0}(x)$ attains its maximum at $x_0 \in \p M$, we know that all tangential derivatives of $Z_{c_0}(x)$ vanish at $x_0$ and
\begin{eqnarray*}
   0 \leq  \langle \nabla Z_{c_0}, \nu \rangle(x_0)&=& p|\nabla u|^{p-2} H_u(\nabla u, \nu) - p (\vp')^{p-2}\vp'' \langle \nabla u, \nu \rangle \\
   &=& p|\nabla u|^{p-2} H_u(\nabla u, \nu) \\
   &=& -p |\nabla u|^{p-2} II(\nabla u, \nabla u) \leq 0.
\end{eqnarray*}
Here the last step is because of the convexity of $\p M$. Therefore we have $\nabla Z_{c_0}(x_0) =0$ and the claim is proved. 
\end{proof}

The proof of Theorem \ref{Thm grad comp} is complete now. 
\end{proof}

\begin{remark}\label{rmk regularity}
The eigenfunction $u$ is in general not smooth. We have $u \in C^{1,\a}(M) \cap W^{1,p}(M)$, and elliptic theory ensures that $u$ is smooth where $\nabla u\neq 0$ and $u\neq 0$.
If $\nabla u \neq 0$ and $u(x)=0$, then $u \in C^{3,\a}(U)$ if $p>2$ and $u \in C^{2,\a}(U)$ if $1<p<2$, where $U$ is a small neighborhood of $x$.
We refer the reader to \cite{Tolksdorf84} for these results.
\end{remark}

\begin{remark}\label{Rmk regularity}
If $1<p<2$ and $u(x_0) =0$, we only know that $u$ is $C^{2, \a}$ near $x_0$ and $Z$ is $C^{1,\a}$ near $x_0$. Thus the $P^{II}_{u,f}(Z)$ may not be defined since there are two diverging terms in it. 
As we can see in equation \eqref{eq8.5}, these term are 
\begin{equation*}
    -(p-1)\l |u|^{p-2} |\nabla u|^p \text{  and  } -|\nabla u|^p \frac{1}{\vp'} \frac{d}{dt} (\L_p \vp).
\end{equation*}
Since $\nabla u(x_0) \neq 0$, there exists an open set $U$ containing $x_0$ such that $U\setminus \{u=0\}$ is open and dense in $U$. On this set, we see that these two terms exactly cancel each other, and all the other term in $P^{II}_{u,f}(Z_{c_0})$ are well-defined and continuous on $U$. Thus the formula $P^{II}_{u,f} Z_{c_0} \leq 0$ is valid even in this low regularity setting.  
\end{remark}

\section{One-dimensional Models for $\kappa >0$}

In this section, we study the qualitative behavior of the one-dimensional equation 
\begin{equation}\label{ODE new}
     (p-1)|w'|^{p-2}w'' -\kappa \, t |w'|^{p-2}w'+\lambda |w|^{p-2}w =0.
\end{equation}
for $\kappa >0$.  The case $\kappa \leq 0$ was treated in \cite{LW19eigenvalue}. 

We recall some basic definitions and properties of $p$-trigonometric functions and refer the reader to \cite[Chapter 1]{DP05} for more details. For $1<p<\infty$, let $\pi_p$ be the positive number defined by
\begin{equation*}
    \pi_p=\int_{-1}^1 \frac{ds}{(1-s^p)^{1/p}} =\frac{2\pi}{p \sin(\pi/p)}.
\end{equation*}
The $p$-sine function $\sin_p:\R \to [-1,1]$ is defined implicitly on $[-\pi_p/2,3\pi_p/2]$ by
\begin{equation*}
    \begin{cases}
    t=\int_0^{\sin_p(t)} \frac{ds}{(1-s^p)^{1/p}} &  \text{ if } t\in [-\frac{\pi_p}{2},\frac{\pi_p}{2}], \\
    \sin_p(t)=\sin_p(\pi_p-t) & \text{ if }  t\in [\frac{\pi_p}{2}, \frac{3\pi_p}{2}],
    \end{cases}
\end{equation*}
and is periodic on $\R$ with period $2\pi_p$. It's easy to see that for $p\neq 2$ this function is smooth around noncritical points, but only $C^{1,\alpha}(\R)$ with $\a =\min\{p-1,(p-1)^{-1} \}$.
By defining
\begin{equation*}
 \cos_p(t)=\frac{d}{dt} \sin_p(t) \text{ and } \tan_p(t) =\frac{\sin_p(t)}{\cos_p(t)},
\end{equation*}
we then have the following generalized trigonometric identities:
\begin{align*}
   & |\sin_p(t)|^p+|\cos_p(t)|^p =1, \\
   &    \frac{d}{dt}\tan_p(t) =\frac{1}{|\cos_p(t)|^p} =1+|\tan_p(t)|^p, \\
   & \frac{d}{dt}\arctan_p(t) =\frac{1}{1+|t|^p}.
\end{align*}
Let $\alpha=\left(\frac{\lambda}{p-1}\right)^{1/p}$.
We introduce the $p$-polar coordinates $r$ and $\theta$ defined by
\begin{equation}
    \alpha w =r \sin_{p}(\theta), \; w'=r \cos_p(\theta),
\end{equation}
or equivalently,
\begin{equation}
    r=\left( (w')^p +\alpha^p w^p \right)^{\frac 1 p}, \; \theta=\arctan_p\left(\frac{\alpha w}{w'}\right).
\end{equation}
If $w$ is a solution of \eqref{ODE new} with $w(a)=-1$ and $w'(a)=0$, 
then direct calculation shows that $\theta$ and $r$ satisfy

\begin{align}\label{IVP 3}
    & \begin{cases}
    \theta'=\alpha -\frac{\kappa t}{p-1} \cos_p^{p-1}(\theta)\sin_p(\theta), \\
    \theta(a) = -\frac{\pi_p}{2}, \; ( \text{mod} \, \pi_p);
    \end{cases} \\ \label{IVP 4}
    & \begin{cases}
    \frac{d}{dt}\log r=\frac{\kappa t}{p-1} \cos_p^{p}(\theta),  \\
    r(a) = \alpha.
   \end{cases}
\end{align}
Since both $\sin_p(t)$ and $\cos_p^{p-1}(t)$ are Lipschitz functions with Lipschitz constant $1$, we can apply Cauchy's theorem to obtain existence, uniqueness, and continuous dependence on the parameters for \eqref{ODE new}, \eqref{IVP 3} and \eqref{IVP 4}. Indeed, we have the following proposition.
\begin{prop}\label{prop Cauchy}
For any $a \in \R$, there exists a unique solution $w$ to \eqref{ODE new} with $w(a)=-1$ and $w'(a) =0$, defined on $\R$ with $w, (w')^{p-2}w' \in C^1(\R)$. Moreover, the solution depends continuously on the parameters in the sense of local uniform convergence of $w$ and $\dot{w}$ in $\R$.
\end{prop}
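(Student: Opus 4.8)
The plan is to reduce the nonlinear second-order ODE \eqref{ODE new} to a first-order system that is amenable to the classical Picard–Lindelöf (Cauchy) existence-uniqueness theorem, and then upgrade local existence to global existence on all of $\R$. The key observation, already built into the $p$-polar coordinate change above, is that although \eqref{ODE new} degenerates where $w'=0$ (the term $(p-1)|w'|^{p-2}w''$ is not solvable for $w''$ when $p>2$, or $w''$ blows up when $1<p<2$), the transformed system \eqref{IVP 3}–\eqref{IVP 4} for $(\theta, \log r)$ has a \emph{globally Lipschitz} right-hand side on any compact $t$-interval. First I would verify that $(w, (w')^{p-2}w')$ is indeed a well-defined $C^1$ object: writing $v = |w'|^{p-2}w'$, equation \eqref{ODE new} becomes $v' = \kappa t\, w' + \text{(lower order)}$ after dividing by $(p-1)$, wait—more carefully, \eqref{ODE new} reads $(p-1)(|w'|^{p-2}w')' = \kappa t |w'|^{p-2}w' - \lambda |w|^{p-2}w$, so setting $v = |w'|^{p-2}w'$ gives the system $w' = |v|^{1/(p-1)-1}v$ (i.e.\ $|v|^{(2-p)/(p-1)}v$, the inverse of the $p$-power map), $v' = \frac{1}{p-1}(\kappa t\, v - \lambda |w|^{p-2}w)$. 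The map $v \mapsto |v|^{(2-p)/(p-1)}v$ is continuous but only Hölder (not Lipschitz) at $v=0$ when $p>2$, so uniqueness is not immediate from this form; this is precisely why one passes to polar coordinates.

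The main steps I would carry out are: (1) Show that the initial condition $w(a)=-1$, $w'(a)=0$ corresponds under the polar change of variables to $r(a)=\alpha > 0$, $\theta(a) = -\pi_p/2$, and that as long as $r$ stays positive the correspondence $(w,w') \leftrightarrow (r,\theta)$ is a genuine change of variables (with $w = \frac{r}{\alpha}\sin_p\theta$, $w' = r\cos_p\theta$). (2) Apply Cauchy's theorem to the system \eqref{IVP 3}–\eqref{IVP 4}: the right-hand side of the $\theta$-equation is $\alpha - \frac{\kappa t}{p-1}\cos_p^{p-1}(\theta)\sin_p(\theta)$, which, because $\sin_p$ and $\cos_p^{p-1}$ are globally Lipschitz with constant $1$ (a fact recalled in the excerpt) and $t$ ranges over a compact interval, is Lipschitz in $\theta$ uniformly on compact $t$-sets; similarly for $\frac{d}{dt}\log r = \frac{\kappa t}{p-1}\cos_p^p(\theta)$, noting $\cos_p^p$ is Lipschitz in $\theta$. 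This yields a unique local $C^1$ solution $(\theta, \log r)$. (3) Globalize: since the $\theta$-equation has right-hand side bounded by $|\alpha| + \frac{\kappa |t|}{p-1}$ on $[-T,T]$, $\theta$ cannot blow up in finite time; and then $\log r$ is the integral of a function bounded on compact sets, so $r$ stays finite \emph{and} bounded away from zero on every $[-T,T]$ (because $\frac{d}{dt}\log r$ is bounded, $\log r$ stays finite), hence the solution extends to all of $\R$. (4) Transform back: $w := \frac{r}{\alpha}\sin_p\theta$ solves \eqref{ODE new}, and $(w')^{p-2}w' = r^{p-1}\cos_p^{p-1}\theta \in C^1$ because $r, \theta \in C^1$ and $\cos_p^{p-1}$ is $C^1$ (indeed Lipschitz, and differentiable along the flow). (5) Continuous dependence on parameters ($a$, and implicitly $\kappa, \lambda, p$) in the sense of local uniform convergence of $w$ and $\dot w$ follows from the standard continuous-dependence part of Cauchy's theorem applied to \eqref{IVP 3}–\eqref{IVP 4}, combined with the back-transformation, which is continuous.

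The main obstacle I anticipate is step (2)/(4): carefully justifying that the degenerate/singular behavior of \eqref{ODE new} at points where $w'=0$ causes no trouble—i.e.\ that the polar system is not merely a formal device but that its solutions genuinely correspond to solutions of \eqref{ODE new} \emph{through} the critical points $w'=0$, and that $(w')^{p-2}w'$ (rather than $w''$ itself, which may not exist when $w'=0$) is the right $C^1$ quantity. One must check that at a point $t_*$ where $\cos_p\theta(t_*) = 0$ (so $w'(t_*)=0$), the formula $v = r^{p-1}\cos_p^{p-1}\theta$ is differentiable with $v'(t_*) = \frac{1}{p-1}(\kappa t_* v(t_*) - \lambda|w(t_*)|^{p-2}w(t_*))$, which follows by differentiating the product and using the $\theta$- and $r$-equations together with $\frac{d}{dt}\cos_p^{p-1}\theta = -(p-1)|\sin_p\theta|^{p-2}\sin_p\theta\cdot\cos_p\theta\cdot\theta'$ being well-defined. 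A secondary (minor) point is verifying the Lipschitz constants of $\sin_p$, $\cos_p^{p-1}$, $\cos_p^p$ cited from \cite{DP05}, which I would state and reference rather than reprove.
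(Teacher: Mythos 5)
Your proposal is correct and follows essentially the same route as the paper, which likewise justifies the proposition by passing to the $p$-polar system \eqref{IVP 3}--\eqref{IVP 4}, invoking the Lipschitz continuity of $\sin_p$ and $\cos_p^{p-1}$, and applying Cauchy's theorem. Your write-up is in fact more detailed than the paper's one-sentence argument, particularly in addressing the degeneracy at $w'=0$ and the correspondence between the polar system and the original equation.
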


Let $\l_0$ be the first nonzero eigenvalue of the eigenvalue problem \eqref{ODE new} on the real line, i.e., 
\begin{equation}\label{lambda 0 def}
    \l_0 =\inf\left\{ \frac{\int_\R |\vp'|^p e^{-\frac{\kappa}{2} s^2} ds}{\int_\R |\vp|^p e^{-\frac{\kappa}{2} s^2} ds}: \vp \in W^{1,p}(\R, e^{-\frac{\kappa}{2}s^2}ds)\setminus \{0\}, \int_\R |\vp|^{p-2} \vp e^{-\frac{\kappa}{2} s^2} ds =0 \right\}.
\end{equation}
Let's first discuss the special case $p=2$. In this case, equation \eqref{ODE new} (when normalizes so that $\kappa =1$) is 
the so-called Hermite's differential equation and solutions with polynomial growth are given by Hermitian polynomials. In particular,  we have $\l_0 =\kappa$ and the corresponding eigenfunction is $\kappa \, t$. On the other hand, it is known that the first nonzero eigenvalue of the $f$-Laplacian is bounded from below by $\kappa$ if $\Ric +\nabla^2 f \geq \kappa \, g$, see for example \cite[Theorem 1.6]{LW19eigenvalue}. 
Thus when $p=2$, we have $\lambda_{p,f} \geq \l_0$ (strict inequality when $M$ is compact) and this inequality plays an important role in the construction of one-dimensional models in \cite{BQ00}.

Our first step here is to get the non-sharp bound $\l_{p,f} > \l_0$ for all $1<p<\infty$ and $\kappa >0$. 
It turns out that this is a consequence of the 
sharp gradient estimates in Theorem \ref{Thm grad comp} 
and the compactness of $M$.

\begin{prop}\label{a priori bound}
Let $(M^n,g,f)$ be a compact Bakry-\'Emery manifold (possibly with $C^2$ convex boundary) satisfying $\Ric +\nabla^2 f \geq \kappa\, g$ for some $\kappa >0$. 
Let $\l_{p,f}$ be the first nonzero eigenvalue of $\Delta_{p,f}$ (with Neumann boundary condition if $\p M \neq \emptyset$). 
Then 
\begin{equation*}
    \l_{p,f} > \l_0,
\end{equation*}
where $\l_0$ is defined in \eqref{lambda 0 def}.
\end{prop}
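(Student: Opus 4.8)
The plan is to exploit the sharp gradient comparison theorem (Theorem~\ref{Thm grad comp}) against a one-dimensional model built directly from an \emph{almost-optimal} test function for $\l_0$, and then use the compactness of $M$ to upgrade the resulting inequality from $\geq$ to strict $>$. First I would argue by contradiction: suppose $\l_{p,f} \leq \l_0$. Let $u$ be an eigenfunction for $\l_{p,f}$, normalized as in Theorem~\ref{Thm grad comp} so that $-1 = \min u < 0 < \max u \leq 1$. The idea is to find a strictly increasing solution $w$ of $(p-1)|w'|^{p-2}w'' - \kappa t |w'|^{p-2}w' = -\l_{p,f}|w|^{p-2}w$ on some interval $[a,b] \subset \R$ whose range $[w(a),w(b)]$ contains $[-1,1]$; once such a $w$ exists, Theorem~\ref{Thm grad comp} gives $|\nabla u(x)| \leq w'(w^{-1}(u(x)))$ pointwise on $M$.

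The existence of such a $w$ is where the assumption $\l_{p,f} \leq \l_0$ enters. Using the $p$-polar coordinate system \eqref{IVP 3}--\eqref{IVP 4} (with $\a = (\l_{p,f}/(p-1))^{1/p}$ in place of the eigenvalue in the display), I would start the solution at $w(a) = -1$, $w'(a) = 0$, i.e. $\theta(a) = -\pi_p/2$, and track the first interval on which $\theta$ increases from $-\pi_p/2$ to $\pi_p/2$ — on this interval $w$ is strictly increasing and $w'$ vanishes only at the endpoints. The key point is that $w$ reaches a maximum value $w(b) \geq 1$ before $\theta$ hits $\pi_p/2$: if instead $w$ stayed below $1$ on this whole half-period, then (after suitable normalization/reflection) $w$ would itself be an admissible competitor in the variational characterization \eqref{lambda 0 def} of $\l_0$ with Rayleigh quotient equal to $\l_{p,f}$, forcing $\l_0 \leq \l_{p,f}$; combined with the contradiction hypothesis $\l_{p,f} \leq \l_0$ this is the borderline case, and one checks it cannot give a strict gradient bound. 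More precisely, I would run the argument so that strict inequality $\l_{p,f} < \l_0$ is handled cleanly by comparison of ODEs, and then treat the equality case $\l_{p,f} = \l_0$ separately.

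With the pointwise bound $|\nabla u| \leq w'(w^{-1}(u))$ in hand, I would derive a contradiction by examining where $u$ attains its extrema. Since $M$ is compact, $u$ achieves $\min u = -1$ and $\max u \leq 1$ at interior points $p_-, p_+$ (using the Neumann condition at the boundary), and at those points $\nabla u = 0$. I would then show that the gradient bound, integrated along a minimizing path from $p_-$ to $p_+$ combined with the diameter being \emph{finite}, is incompatible with equality unless $u$ is constant along that path and $M$ effectively degenerates — contradicting compactness with a genuine $n$-dimensional manifold, or more directly: the equality case of the comparison (which would be forced when $\l_{p,f} = \l_0$) would require $|\nabla u|^2 = A_u^2$, $\Ric_f(\nabla u, \nabla u) = \kappa|\nabla u|^2$, and the one-dimensional model to have \emph{infinite} length, but $M$ compact forces $b - a \leq D < \infty$, ruling this out. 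This last step — showing that the borderline/equality configuration is excluded precisely by compactness of $M$ (equivalently, finiteness of $D$), whereas on the whole real line the infimum $\l_0$ is genuinely approached only by functions spread over all of $\R$ — is the main obstacle, and the care needed is to make the "the model would need infinite length" intuition into a rigorous statement via the behavior of solutions to \eqref{IVP 3} as the eigenvalue parameter approaches $\l_0$.
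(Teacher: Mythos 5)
Your overall strategy --- assume $\l_{p,f}\leq \l_0$, invoke the gradient comparison theorem against a one-dimensional model, and contradict the finiteness of the diameter --- is the same as the paper's, but the two steps where the real work happens are not carried out correctly. First, the construction of the model: you fix $w(a)=-1$, $w'(a)=0$ and assert that $w$ reaches a value $\geq 1$ before $\theta$ hits $\pi_p/2$. Under the hypothesis $\l\leq\l_0$ the phase $\theta$ never reaches $\pi_p/2$ at all (a second Neumann endpoint would make $\l$ the first nonzero Neumann eigenvalue of a bounded interval, which is strictly larger than $\l_0$), so $w$ is increasing on all of $[a,\infty)$; but nothing forces $\sup w\geq \max u$, and if $\sup w<\max u$ the hypothesis of Theorem \ref{Thm grad comp} (range of $u$ contained in the range of $w$) fails and the comparison cannot be applied. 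Your ``admissible competitor'' argument addresses the wrong question (finiteness of the half-period, not the size of $\sup w$), and for $p\neq 2$ it also glosses over the fact that extending a Neumann eigenfunction by constants destroys the nonlinear constraint $\int|\vp|^{p-2}\vp\, e^{-\kappa s^2/2}\,ds=0$. The paper avoids all of this by using the \emph{odd} entire solution $v$ with $v(0)=0$, $v'(0)=1$ (Lemma \ref{lemma 3.1}), whose rescalings $cv$ have range covering $[-1,\max u]$ for suitable $c$.

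Second, and more seriously, even with a valid model $w$ in hand, the resulting inequality $D\geq w^{-1}(\max u)-w^{-1}(-1)$ is finite for a single fixed $w$ and yields no contradiction by itself. The contradiction must come from a \emph{family} of admissible models along which this length blows up: in the paper one takes $w=cv$ and lets $c\to 0$ (if $v\to\infty$) or $c$ decrease to $A^{-1}$ (if $v\to A<\infty$), so that $-w^{-1}(-1)=v^{-1}(1/c)\to\infty$, which is incompatible with $D<\infty$. You correctly identify this as ``the main obstacle,'' but the mechanism you propose in its place --- analyzing the equality case of the Bochner inequality, i.e.\ $|H_u|^2=A_u^2$ and $\Ric_f(\nabla u,\nabla u)=\kappa|\nabla u|^2$ --- is not how the argument closes and would not produce the contradiction, since no equality configuration is ever forced. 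As written, the proof is incomplete at precisely the point where the compactness of $M$ is supposed to enter.
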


We first prove an elementary lemma.  
\begin{lemma}\label{lemma 3.1}
For any $\l \leq \l_0$, the ODE
\begin{equation}\label{eq8.7}
    (p-1)|w'|^{p-2}w'' -\kappa t |w'|^{p-2}w' +\l |w|^{p-2}w =0
\end{equation}
admits an odd solution $w:\R \to \R$ satisfying $w'(t)>0$ for all $t\in \R$. 
\end{lemma}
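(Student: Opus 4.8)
The plan is to build the desired odd solution from the initial value problem studied in Section 4 and to use the variational characterization of $\l_0$ to rule out the solution becoming critical. Concretely, for a given $\l \le \l_0$, consider the solution $w=w_\l$ of \eqref{eq8.7} with $w(0)=0$ and $w'(0)=1$ (normalizing the slope at the origin, which we may do by scaling since the equation is homogeneous in $w$). By Proposition \ref{prop Cauchy} this solution exists on all of $\R$ and has $w,(w')^{p-2}w'\in C^1(\R)$. The function $t\mapsto -w(-t)$ solves the same IVP (the equation is odd under $w\mapsto -w$ together with $t\mapsto -t$, since the term $\kappa t|w'|^{p-2}w'$ is even in this combined symmetry), so by uniqueness $w$ is odd; in particular it suffices to prove $w'(t)>0$ for $t\ge 0$, and then oddness gives it for $t<0$ as well.

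The core of the argument is to show that $w'$ never vanishes on $[0,\infty)$. Suppose, for contradiction, that $t_0>0$ is the first zero of $w'$, so $w'>0$ on $[0,t_0)$ and $w'(t_0)=0$. I would first record that $w>0$ on $(0,t_0]$ and that $w''(t_0)\le 0$: indeed, multiplying \eqref{eq8.7} by an appropriate sign and evaluating near $t_0$, one sees from the ODE that $w(t_0)>0$ (using $\l \le \l_0 \le$ something positive — or more robustly, that $w'(t_0)=0$ together with $w>0$ on the interior forces the sign), hence $w$ attains an interior-type maximum of the restricted problem at $t_0$. The cleanest route, I expect, is the \emph{Rayleigh quotient / test function} argument: extend $w$ past $t_0$ by reflection to produce a function that is admissible (odd, $W^{1,p}_{\mathrm{loc}}$ with the Gaussian weight, and with the right orthogonality built in from oddness) and whose Rayleigh quotient $\int |\vp'|^p e^{-\kappa s^2/2}\,ds \big/ \int |\vp|^p e^{-\kappa s^2/2}\,ds$ equals exactly $\l$; but because this reflected function is not the genuine eigenfunction of the real line (it has a corner, or is compactly supported after truncation), strict inequality in the variational principle forces $\l > \l_0$, contradicting $\l\le\l_0$. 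Making this precise is the one place requiring care: I would truncate $w$ to $[-t_0,t_0]$, extend by zero, check it lies in $W^{1,p}(\R,e^{-\kappa s^2/2}ds)$ and satisfies $\int |\vp|^{p-2}\vp\, e^{-\kappa s^2/2}ds=0$ by oddness, and then integrate \eqref{eq8.7} against $w$ over $[-t_0,t_0]$ — the boundary terms vanish since $(w')^{p-2}w'=0$ at $\pm t_0$ — to get that the Rayleigh quotient of this truncation is exactly $\l$, whence $\l \ge \l_0$; one then upgrades to a strict inequality (the truncation cannot be the actual minimizer because any minimizer of \eqref{lambda 0 def} has non-vanishing derivative / does not vanish on a half-line, or because a minimizer must be real-analytic away from its zero and hence cannot be identically zero on $[t_0,\infty)$), contradicting $\l \le \l_0$. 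Hence no such $t_0$ exists and $w'>0$ on $[0,\infty)$, completing the proof by oddness.

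The main obstacle I anticipate is precisely this last upgrade from "$\ge \l_0$" to "$>\l_0$": the naive truncation argument only yields $\l \ge \l_0$, which is consistent with $\l=\l_0$ and therefore does not yet give a contradiction. To close this gap I would argue that if the truncation achieved the infimum $\l_0$, it would be a weak solution of \eqref{eq8.7} on all of $\R$ (Euler–Lagrange equation for the minimization problem defining $\l_0$), but it vanishes identically on $[t_0,\infty)$ while being nontrivial on $[-t_0,t_0]$ — and by unique continuation / the Cauchy uniqueness in Proposition \ref{prop Cauchy} applied at $t_0$ (where both $w$ and $(w')^{p-2}w'$ vanish), the only solution with such Cauchy data is $w\equiv 0$, a contradiction. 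Thus $\l_0 < \l$, contradicting the hypothesis $\l \le \l_0$, and the lemma follows. An alternative, possibly slicker, route avoids truncation entirely: run the $p$-polar coordinate system \eqref{IVP 3}–\eqref{IVP 4} with $\theta(0)=0$ and show that $\theta$ stays in $(-\pi_p/2,\pi_p/2)$ for all $t\ge 0$ (equivalently $w'>0$), deriving a contradiction with $\l\le\l_0$ from a comparison with the case $p=2$ or from monotonicity of the first eigenvalue on intervals $[-T,T]$ as $T\to\infty$; I would keep this in reserve in case the weighted-Sobolev bookkeeping in the truncation argument proves cumbersome.
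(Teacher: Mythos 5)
Your overall strategy is the same as the paper's: solve the initial value problem $w(0)=0$, $w'(0)=1$ via Proposition \ref{prop Cauchy}, deduce oddness from uniqueness, and derive a contradiction with $\l\le\l_0$ from a hypothetical first zero $t_0>0$ of $w'$ by viewing $w|_{[-t_0,t_0]}$ as a Neumann eigenfunction and comparing with the variational characterization \eqref{lambda 0 def}. The paper states this last comparison in one line; your attempt to make it precise contains one genuine mis-step. (Also, note that $w(t_0)>0$ needs no appeal to the ODE: it follows from $w(0)=0$ and $w'>0$ on $[0,t_0)$.)

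The error is ``extend by zero'': since $w(t_0)>0$, the zero extension of $w|_{[-t_0,t_0]}$ is discontinuous at $\pm t_0$ and hence not in $W^{1,p}(\R, e^{-\kappa s^2/2}ds)$, so it is not an admissible competitor in \eqref{lambda 0 def}. The difficulty you then flag as the main obstacle --- upgrading $\l\ge\l_0$ to $\l>\l_0$, together with the unique-continuation patch --- is an artifact of this choice. The correct extension is by the constant values $\pm w(t_0)$ outside $[-t_0,t_0]$: the resulting function $\tilde w$ is $C^1$ (because $w'(\pm t_0)=0$), odd (so the orthogonality constraint holds), and lies in the weighted Sobolev space since the Gaussian weight is integrable for $\kappa>0$. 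Integrating \eqref{eq8.7} against $w$ over $[-t_0,t_0]$ (the boundary terms vanish because $w'(\pm t_0)=0$) gives $\int_{-t_0}^{t_0}|w'|^p e^{-\kappa s^2/2}\,ds=\l\int_{-t_0}^{t_0}|w|^p e^{-\kappa s^2/2}\,ds$, while the denominator of the Rayleigh quotient of $\tilde w$ is strictly larger than $\int_{-t_0}^{t_0}|w|^p e^{-\kappa s^2/2}\,ds$ because the nonzero constant tails contribute positively. Hence $\l_0\le R(\tilde w)<\l$ directly, with no need for any unique-continuation argument, and the contradiction with $\l\le\l_0$ is immediate. With this correction your proof closes and coincides with the paper's (tersely stated) argument.
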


\begin{proof}[Proof of Lemma \ref{lemma 3.1}]
By Proposition \ref{prop Cauchy}, there exists a solution $w$ of \eqref{eq8.7} satisfying $w(0)=0$ and $w'(0)=1$. 
The oddness of $w$ follows from uniqueness of solutions. The condition $w'(t)>0$ for all $t\in \R$ is a consequence of $\l \leq \l_0$. Otherwise, we get a solution of \eqref{eq8.7} on some interval $[-a, a]$ with $w'(-a)=w'(a)=0$ and $w'(t) >0$ on $(-a,a)$, implying that $\l$ is the first nonzero eigenvalue of \eqref{eq8.7} on $[-a,a]$, which would contradict $\l \leq \l_0$. 

\end{proof}

\begin{proof}[Proof of Proposition \ref{a priori bound}]
We argue by contradiction. 
Suppose $\l \leq \l_0$, then by Lemma \ref{lemma 3.1}, there exists a odd solution $v:\R \to \R$ of the ODE \eqref{eq8.7} satisfying 
$v'(t)>0$ for all $t\in \R$. 
Since $v(t)$ is strictly increasing, we have that either $v(t)$ approaches infinity as $t\to \infty$ or $\lim_{t\to \infty} v(t) = A \in (0, \infty)$. 

Let $u$ be an eigenfunction associated to the eigenvalue $\l_{p,f}$ normalized so that $-1=\min \{u\} < 0 < \max \{u\} \leq 1$. 
Consider two points $x_0$ and $y_0$ such that $u(x_0)=\min_{x\in M} u(x)$, and $u(y_0)=\max_{x\in M} u(x)$. Let $L=d(x_0,y_0)$ and $\gamma: [0, L] \to M$ be a unit speed geodesic joining $x_0$ and $y_0$. 
Define $h(t)=u(\gamma(t))$ and choose $I \subset [0,L]$ such that 
$h'>0$ on $I$ and $h^{-1}$ is well-defined in a subset of full measure of $[-1, \max\{u\}]$.  

For any number $c$ such that the range of $cv$ contains the interval $[-1,1]$, we can apply Theorem \ref{Thm grad comp} 
with $w= cv$ on the interval $[w^{-1}(-1), w^{-1}(\max \{u\})]$ to conclude that 
\begin{eqnarray*}
D &\geq& \int_0^L \, dt \geq \int_{I} \, dt \geq \int_{-1}^{\max \{u\}} \frac{dy}{h'(h^{-1}(y))} 
\geq \int_{-1}^{\max \{u\}} \frac{dy}{w'(w^{-1}(y))} \\
&=& \int_{w^{-1}(-1)}^{w^{-1}(\max\{u\})} dt  = w^{-1}(\max\{u\})-w^{-1}(-1) \geq -w^{-1}(-1)
=v^{-1}\left(\frac{1}{c}\right).
\end{eqnarray*}
If $\lim_{t\to \infty} v(t) =\infty$, we see that the right hand side goes to infinity by letting $c\to 0$.  
If $\lim_{t\to \infty} v(t) =A \in (0,\infty)$, the right hand side goes to infinity by letting $c$ decrease to $A^{-1}$. Either way, this is a contradiction to the finiteness of the diameter of $M$. 
\end{proof}

For the purpose of getting sharp eigenvalue estimates, we need to show that for any eigenfunction $u$ of $\Delta_{p,f}$ with eigenvalue $\l >\l_0$, there exist an interval $[a,b]$ and a solution $w$ of \eqref{ODE new} such that $w$ is strictly increasing on $[a,b]$ with $w(a)=\min\{ u\}$ and $w(b)=\max\{ u\}$.
As we shall see, this can be achieved by varying the initial data.

The rest of this section is a slight modification of \cite[Section 4]{LW19eigenvalue}. 
Fix $\l > \l_0$ and $\kappa >  0$.
Let $w_a$ be the solution of the following initial value problem (IVP)
\begin{equation}\label{IVP new}
    \begin{cases}
    (p-1)|w'|^{p-2}w'' -\kappa \, t |w'|^{p-2}w'+\lambda |w|^{p-2}w =0, \\
    w(a)=-1, w'(a)=0,
    \end{cases}
\end{equation}
with $a\in \R$.

\begin{prop}\label{prop symmetric}
Fix $\l > \l_0$ and $\kappa > 0$. There exists a unique $\bar{a}>0$ such that the solution $w_{-\bar{a}}$ to the IVP \eqref{IVP new} is odd. In particular, $w_{-\bar{a}}$ restricted to $[-\bar{a},\bar{a}]$ has nonnegative derivative and has maximum value equal to one.
\end{prop}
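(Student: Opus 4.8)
The plan is to parametrize the family of solutions $w_a$ of the IVP \eqref{IVP new} by the initial point $a$ and track the location of the first critical point of $w_a$ to the right of $a$. For each $a$, since $w_a(a) = -1 < 0$ and $w_a'(a) = 0$, one checks via the $p$-polar coordinates \eqref{IVP 3}–\eqref{IVP 4} (with $\theta(a) = -\pi_p/2$) that $w_a$ is strictly increasing immediately past $a$; because $\l > \l_0$, the analogue of the argument in Lemma \ref{lemma 3.1} shows $w_a$ cannot stay increasing forever on $(a,\infty)$, so there is a first value $b(a) > a$ with $w_a'(b(a)) = 0$ and $w_a' > 0$ on $(a, b(a))$. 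Define $F(a) := b(a) + a$, the "center of symmetry candidate": by construction $w_a$ restricted to $[a, b(a)]$ is a strictly increasing solution of \eqref{ODE new}, and I claim $w_{-\bar a}$ is odd precisely when $F(-\bar a) = 0$, i.e. $b(-\bar a) = \bar a$. Indeed, the ODE \eqref{ODE new} is invariant under the symmetry $(t,w) \mapsto (-t, -w)$, so if $w_a$ has its first critical point at $b(a) = -a$ then $t \mapsto -w_a(-t)$ solves the same IVP as $w_a$, forcing $w_a$ to be odd by uniqueness (Proposition \ref{prop Cauchy}).

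So the task reduces to producing a unique zero of $F$ on $(-\infty, 0)$, equivalently solving $b(a) = -a$ for $a < 0$. First I would establish continuity of $a \mapsto b(a)$: this follows from the continuous dependence of $w_a$ and $\dot w_a$ on the initial parameter $a$ asserted in Proposition \ref{prop Cauchy}, together with the fact that at $t = b(a)$ one has $w_a'(b(a)) = 0$ but $w_a''(b(a)) \neq 0$ — the latter because at a critical point the ODE \eqref{ODE new} gives $(p-1)|w'|^{p-2}w'' = -\l|w|^{p-2}w$, which is nonzero as long as $w_a(b(a)) \neq 0$; and $w_a(b(a)) > w_a(a) = -1$ combined with the fact that an increasing solution hitting a critical point while still negative is impossible (it would have to turn around, but one checks the sign of $w''$ from the ODE keeps it increasing while $w < 0$). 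Hence $b(\cdot)$ is well-defined and continuous, so $F$ is continuous on $(-\infty, 0)$.

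Next come the two endpoint estimates that pin down a zero by the intermediate value theorem. As $a \to 0^-$, by continuous dependence $w_a \to w_0$, whose first critical point $b(0) > 0$ is some fixed positive number, so $F(a) \to b(0) > 0$. As $a \to -\infty$, I expect $b(a) \to +\infty$ but with $b(a) + a \to -\infty$ — heuristically, far out in the region $t \ll 0$ the drift term $-\kappa t |w'|^{p-2} w'$ is large and positive (since $w' > 0$, $t < 0$), which strongly suppresses the growth of $w'$ and thus delays... no: a large positive coefficient on $w'$ in the equation $(p-1)|w'|^{p-2}w'' = \kappa t |w'|^{p-2} w' - \l |w|^{p-2} w$ makes $w'' $ very negative, so the solution turns around quickly, giving $b(a) - |a|$ bounded or $\to -\infty$; this is exactly the phenomenon exploited in Proposition \ref{a priori bound}. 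Making this precise — showing $\liminf_{a\to-\infty} F(a) < 0$, or better $F(a) \to -\infty$ — is the main obstacle, and I would handle it by a comparison argument on $[a, a+1]$ (say) using \eqref{IVP 3}: the angle $\theta$ starts at $-\pi_p/2$ and $\theta' = \alpha - \frac{\kappa t}{p-1}\cos_p^{p-1}(\theta)\sin_p(\theta)$, and for $t$ very negative the negative-of-a-negative structure forces $\theta$ to increase to $\pi_p/2$ (the next critical value) within a short interval whose length $\to 0$ as $a \to -\infty$, whence $b(a) - a \to 0$ and $F(a) = b(a) + a = (b(a)-a) + 2a \to -\infty$. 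Continuity plus these two limits yields existence of $\bar a > 0$ with $F(-\bar a) = 0$.

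Finally, for uniqueness I would show $F$ is strictly monotone in $a$, or at least that it can vanish only once. The cleanest route is to argue that $a \mapsto b(a) - a$ (the length of the increasing arc) is strictly increasing in $a$: heuristically, moving the starting point to the right places the solution in a region where the drift coefficient $\kappa t$ is larger and hence... one must be careful about signs, so alternatively I would use the $p$-polar coordinate equation \eqref{IVP 3} directly. Writing the increasing arc as the time for $\theta$ to travel from $-\pi_p/2$ to $\pi_p/2$, a comparison/Sturm-type argument on the $\theta$-ODE — using that the vector field $\alpha - \frac{\kappa t}{p-1}\cos_p^{p-1}\theta\sin_p\theta$ depends monotonically on $t$ in the relevant range — shows this travel time, as a function of the left endpoint, has the right monotonicity to make $F$ strictly increasing, hence at most one zero. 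The last sentence of the proposition (nonnegative derivative on $[-\bar a, \bar a]$ and maximum value $1$) is then immediate: oddness of $w_{-\bar a}$ gives $w_{-\bar a}(\bar a) = -w_{-\bar a}(-\bar a) = 1$, and $w_{-\bar a}' = w_{-\bar a}'(-\,\cdot\,) \geq 0$ on $[-\bar a, \bar a]$ because $w_{-\bar a}' > 0$ on $(-\bar a, \bar a)$ by construction of $b(-\bar a) = \bar a$ as the first critical point.
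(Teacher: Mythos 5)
Your overall strategy (shoot in the parameter $a$, locate a zero of $F(a)=b(a)+a$ by the intermediate value theorem, then prove monotonicity for uniqueness) is not the paper's route, and two of its load-bearing steps are false as stated. The fatal one is the asymptotic at $a\to-\infty$: you claim $b(a)-a\to 0$, but the sign of the drift goes the other way. For $t<0$ and $\theta\in(-\pi_p/2,0)$ (equivalently $w\in(-1,0)$, $w'>0$) one has $\cos_p^{p-1}\theta>0$ and $\sin_p\theta<0$, so the term $-\frac{\kappa t}{p-1}\cos_p^{p-1}\theta\sin_p\theta$ in \eqref{IVP 3} is \emph{negative}: the drift retards $\theta$, it does not accelerate it. In the $w$-picture, $(p-1)|w'|^{p-2}w''=\kappa t|w'|^{p-2}w'-\l|w|^{p-2}w$ caps $w'$ at roughly $(\l/(\kappa|a|))^{1/(p-1)}$ while $w<0$, so the solution crawls from $-1$ toward $0$ and $\delta(a)=b(a)-a$ becomes \emph{large}, not small. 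This is exactly what Proposition \ref{prop central} records: $\delta(a)\geq\delta(-\bar a)=2\bar a$ for all $a\leq-\bar a$, with strict inequality for $a<-\bar a$. So your limit $F(a)\to-\infty$ is unproved, and since $F=\delta(a)+2a$ with $\delta(a)$ growing (my rough estimate gives $\delta(a)\gtrsim|a|^{1/(p-1)}$), it is not even clear the sign change occurs when $1<p<2$. The same Proposition \ref{prop central} kills your uniqueness argument: $\delta$ attains its minimum at $a=-\bar a$, so it is certainly not strictly increasing in $a$, and the fallback Sturm comparison does not work as described because $\partial_t$ of the drift in \eqref{IVP 3} changes sign with $\sin_p\theta$. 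A smaller (fixable) gap: if $b(a)=-a$, the reflection $v(t)=-w_a(-t)$ satisfies $v(a)=-m(a)$, not $-1$, so it does not solve ``the same IVP''; you need the $(p-1)$-homogeneity of \eqref{ODE new} to get $v=m(a)\,w_a$ and then $m(a)^2=1$.

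The paper avoids all of this by running the phase flow from the \emph{center} rather than from the left endpoint: it solves \eqref{IVP 3} with $\theta(0)=0$, notes that the vector field is invariant under $(t,\theta)\mapsto(-t,-\theta)$ so that $\theta$ is odd and hence $r$ in \eqref{IVP 4} is even, uses $\l>\l_0$ to find the (unique, since $\dot\theta=\alpha>0$ whenever $\theta=\pm\pi_p/2$) time $\bar a$ with $\theta(\bar a)=\pi_p/2$, and translates back to $w$. Existence, uniqueness, oddness, and $m(-\bar a)=1$ all come out of the symmetry of the phase equation, with no need for continuity of $b(\cdot)$, endpoint asymptotics, or monotonicity of $F$. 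I would recommend abandoning the shooting argument in favor of this symmetry argument.
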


\begin{proof}
Consider the initial value problem 
\begin{equation}\label{IVP 5}
    \begin{cases}
    \dot{\theta}=\alpha -\frac{\kappa t}{p-1} \cos_p^{p-1}(\theta)\sin_p(\theta), \\
    \theta(0) = 0.
    \end{cases}
\end{equation}
The uniqueness of solutions implies that $\theta(t)$ is an odd function. 
The fact $\l >\l_0$ implies there exists $\bar{a} >0$ such that 
$\theta(\bar{a}) =\pi_p /2$. 
It's easily seen that the corresponding solution $r(t)$ to \eqref{IVP 4} is even, regardless of its initial value.
The proposition follows by translating obtained information on $\theta$ and $r$ back to $w$.
\end{proof}

For the solution $w=w_a$ of \eqref{IVP new}, we define
\begin{align*}
    b(a) &=\inf \{ b>a : w'(b)=0 \}, \\
   m(a) &= w_a (b(a)), \\
   \delta(a) & =b(a)-a.
\end{align*}
In other words, $b(a)$ is the first value $b>a$ such that $w'(b)=0$ with the convention that $b(a)=\infty$ if such a value does not exist.  Thus $w$ is strictly increasing on the interval $[a, b(a)]$ and $m(a)$ is the maximum of $w$ on $[a,b(a)]$.
The function $\delta(a)$ measures the length of the interval where $w(a)$ increases from $-1$ to $m(a)$. 

We are concerned with the range of the function $m(a)$ as $a$ varies.
Clearly $m(a) >0$ since the eigenfunction $w$ changes sign. Also, Proposition \ref{prop symmetric} implies $m(-\bar{a}) =1$. We shall show in the next proposition that $m(a)$ goes to zero as $a$  goes to $-\infty$. It then follows from the Intermediate Value Theorem that the range of $m(a)$ covers $(0,1]$ when $a$ varies in $ (-\infty, -\bar{a}]$.

\begin{prop}\label{prop range}
$$\lim_{a \to -\infty} m(a) =0. $$
\end{prop}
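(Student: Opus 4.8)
The plan is to show that as the starting point $a$ of the solution $w_a$ to the IVP \eqref{IVP new} recedes to $-\infty$, the maximum value $m(a)$ of $w_a$ on $[a, b(a)]$ tends to $0$. I would work in the $p$-polar coordinates $(r,\theta)$ introduced above: recall $\alpha w = r\sin_p(\theta)$ and $w' = r\cos_p(\theta)$, so that the condition $w(a) = -1$, $w'(a) = 0$ becomes $\theta(a) = -\pi_p/2$ (mod $\pi_p$) and $r(a) = \alpha$, while $w'(b(a)) = 0$ corresponds to $\theta$ first reaching $\pi_p/2$, and at that point $m(a) = r(b(a))/\alpha$. Thus it suffices to prove that $r(b(a)) \to 0$ as $a \to -\infty$, equivalently $\log r(b(a)) - \log r(a) \to -\infty$, since $r(a) = \alpha$ is fixed.

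From \eqref{IVP 4} we have $\frac{d}{dt}\log r = \frac{\kappa t}{p-1}\cos_p^p(\theta)$, so
\begin{equation*}
    \log r(b(a)) - \log \alpha = \frac{\kappa}{p-1}\int_a^{b(a)} t\, \cos_p^p(\theta(t))\, dt.
\end{equation*}
When $a \to -\infty$ the interval $[a, b(a)]$ lies in the region $t < 0$ (at least for $a$ sufficiently negative, since $b(a)$ will also be negative — I would verify this), so the integrand $t\cos_p^p(\theta)$ is nonpositive, and the integral is $\leq 0$. To see that it actually diverges to $-\infty$, I would control the $\theta$-equation \eqref{IVP 3}: $\theta' = \alpha - \frac{\kappa t}{p-1}\cos_p^{p-1}(\theta)\sin_p(\theta)$. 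For $t$ very negative, the drift term $-\frac{\kappa t}{p-1}\cos_p^{p-1}(\theta)\sin_p(\theta)$ is large and positive when $\sin_p(\theta) < 0$ (i.e. on the lower half of the cycle where $\theta \in (-\pi_p/2, 0)$), which forces $\theta$ to sweep quickly through that part, but near $\theta = 0$ the product $\cos_p^{p-1}(\theta)\sin_p(\theta)$ vanishes, so $\theta' \approx \alpha$ there; hence $\theta$ spends a definite amount of $t$-time (bounded below independently of $a$) in a fixed neighborhood of $\theta = 0$, on which $\cos_p^p(\theta)$ is bounded below by a positive constant $c_0$. Over that sub-interval, which I claim occurs at a value of $t$ that is $\leq a + C$ for a fixed constant $C$ (the time to run from $\theta = -\pi_p/2$ up to a neighborhood of $0$ is uniformly bounded), the contribution to the integral is at most $\frac{\kappa}{p-1}\cdot c_0 \cdot (\text{length})\cdot (a + C') \to -\infty$.

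The main obstacle I expect is making the last estimate rigorous: I need a uniform-in-$a$ lower bound on the amount of $t$-length that $\theta(t)$ spends in a fixed neighborhood of $0$ where $\cos_p(\theta)$ is bounded away from $0$, together with the location of that neighborhood being pushed to $t \to -\infty$ as $a \to -\infty$. The bound $\theta' = \alpha - \frac{\kappa t}{p-1}\cos_p^{p-1}(\theta)\sin_p(\theta) \leq \alpha + \frac{\kappa|t|}{p-1}|\cos_p^{p-1}(\theta)\sin_p(\theta)|$ is not directly useful because $|t|$ is large; the point is rather that on $\{\sin_p(\theta) \le 0\}$ the drift is nonnegative so $\theta' \geq \alpha$, which gives $\theta$ reaches a neighborhood of $0$ in $t$-time at most $\pi_p/(2\alpha)$, i.e. by $t \leq a + \pi_p/(2\alpha)$; and once $\theta$ is in a small fixed neighborhood $(-\eta, \eta)$ of $0$, a crude bound $|\theta'| \leq \alpha + C_\eta |t|$ combined with Grönwall over a short $t$-interval shows $\theta$ cannot leave $(-\eta,\eta)$ too fast — alternatively, I would choose $\eta$ small enough that $\theta' \geq \alpha/2 > 0$ on $(-\eta, 0)$ for all relevant $t < 0$, forcing $\theta$ through $(-\eta, 0)$ in $t$-time at least $2\eta/\alpha$... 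I would need to be a little careful here since for $\theta \in (0,\eta)$ and $t<0$ the drift term is negative, which only helps keep $\theta$ small longer. Assembling these pieces gives the desired divergence and hence $m(a) \to 0$. (This argument closely parallels the treatment in \cite[Section 4]{LW19eigenvalue}; I would cite it for the parts that transfer verbatim and only spell out the modifications forced by $\kappa > 0$.)
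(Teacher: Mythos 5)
Your polar-coordinate framing is the natural one here (the paper omits its own proof, deferring to the ``dual'' argument in \cite{LW19eigenvalue}, but any direct proof would start exactly where you do: $m(a)=r(b(a))/\alpha$ and $\log r(b(a))-\log\alpha=\frac{\kappa}{p-1}\int_a^{b(a)}t\cos_p^p(\theta)\,dt$). The argument then breaks on a sign error. For $t<0$ and $\theta\in(-\pi_p/2,0)$ we have $\cos_p\theta>0$ and $\sin_p\theta<0$, so the drift $-\frac{\kappa t}{p-1}\cos_p^{p-1}(\theta)\sin_p(\theta)=\frac{\kappa|t|}{p-1}\cos_p^{p-1}(\theta)\sin_p(\theta)$ in \eqref{IVP 3} is \emph{nonpositive} on $\{\sin_p\theta\le 0\}$, not nonnegative, and for $|t|$ large it dominates $\alpha$ as soon as $\cos_p^{p-1}\theta\,|\sin_p\theta|>\frac{(p-1)\alpha}{\kappa|t|}$. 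Hence $\theta_a$ does \emph{not} sweep up to a neighborhood of $0$ within time $\pi_p/(2\alpha)$ of $a$; it is trapped just above $-\pi_p/2$, tracking the nullcline $\cos_p^{p-1}\theta\approx\frac{(p-1)\alpha}{\kappa|t|}$ (a comparison argument for $\psi=\cos_p^{p-1}\theta$, which satisfies $\psi'\le(p-1)\alpha-\kappa|t|\,|\sin_p\theta|^p\psi$, gives $\psi(t)\le C/|t|$ for $t\in[a,-T_1]$ with $T_1$ fixed), and it only crosses $\theta=0$ after $|t|$ has decreased to a bounded value. Since also $b(a)\le\bar a$ (compare with the odd solution of Proposition \ref{prop symmetric}), the whole stretch where $\cos_p^p\theta$ is bounded below occurs at \emph{bounded} $|t|$ and contributes only $O(1)$ to $\log r$. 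Your proposed mechanism therefore produces no divergence.

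The actual source of decay is the hovering phase itself: along the nullcline $|t|\cos_p^p\theta\sim c\,|t|^{-1/(p-1)}$, and $\int_{T_1}^{|a|}\tau^{-1/(p-1)}\,d\tau\to\infty$ as $a\to-\infty$ exactly when $p\ge2$ (logarithmically for $p=2$, like $|a|^{(p-2)/(p-1)}$ for $p>2$); a correct proof must be a two-sided estimate of $\theta$ along this nullcline rather than a quick-sweep argument. Be aware that the same computation shows the integral \emph{converges} for $1<p<2$ (equivalently, the recessive behavior $w'\sim(\lambda/(\kappa|t|))^{1/(p-1)}$ is integrable at $-\infty$), so $\log m(a)$ stays bounded and $m(a)$ does not tend to zero in that range; the statement should be read as (and is only needed for) $p\ge 2$, the case $1<p\le2$ of Theorem \ref{Thm Main} having been settled in \cite{LW19eigenvalue}. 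Your side remark that $b(a)<0$ for $a\ll0$ is neither needed nor clearly true, but that is minor next to the sign of the drift.
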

\begin{proof}
The argument is dual to the proof of \cite[Proposition 4.3]{LW19eigenvalue}, and we omit it here.
\end{proof}

\begin{prop}\label{prop central}
We have $\delta (a) \geq \delta(-\bar{a})$ for all $a \leq -\bar{a}$ with strict inequality if $a \neq -\bar{a}$.
\end{prop}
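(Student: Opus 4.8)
The plan is to use the $p$-polar coordinates $(r,\theta)$ from Section 4 and reduce the comparison of the interval lengths $\delta(a)$ to a comparison of the $\theta$-dynamics. Recall that along $w_a$ the angle $\theta$ satisfies the first-order ODE \eqref{IVP 3}, namely $\theta' = \alpha - \frac{\kappa t}{p-1}\cos_p^{p-1}(\theta)\sin_p(\theta)$, with $\theta(a) = -\pi_p/2$; and $w_a$ is strictly increasing on $[a,b(a)]$ exactly as long as $\cos_p(\theta) > 0$, i.e. as long as $\theta \in (-\pi_p/2, \pi_p/2)$. Thus $b(a)$ is the first time $\theta$ reaches $\pi_p/2$, and $\delta(a) = b(a) - a$ is the time it takes the flow of \eqref{IVP 3} to go from $-\pi_p/2$ to $\pi_p/2$ starting at time $t=a$. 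Note that the right-hand side of \eqref{IVP 3} depends on $t$ only through the coefficient $\kappa t/(p-1)$, and the sign of the perturbation term $-\frac{\kappa t}{p-1}\cos_p^{p-1}(\theta)\sin_p(\theta)$ is governed by the sign of $t\sin_p(\theta)$: on $(-\pi_p/2,\pi_p/2)$ we have $\sin_p(\theta) > 0$ for $\theta > 0$ and $\sin_p(\theta) < 0$ for $\theta < 0$.

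First I would record the symmetry: by Proposition \ref{prop symmetric}, for $a = -\bar a$ the solution $\theta$ is odd about $t=0$, so $\theta(0) = 0$, and $\theta$ traverses $(-\pi_p/2,0)$ on $[-\bar a, 0]$ and $(0,\pi_p/2)$ on $[0,\bar a]$. Now fix $a < -\bar a$ and let $\theta_a$ be the corresponding solution of \eqref{IVP 3}. The strategy is to split the passage from $-\pi_p/2$ to $\pi_p/2$ at the instant $\theta_a$ crosses $0$, and to compare each half with the symmetric solution by a change of variables. Concretely: let $\tau_a$ be the time at which $\theta_a(\tau_a) = 0$. On the lower arc, where $\theta \in (-\pi_p/2,0)$, we have $\sin_p(\theta) < 0$, so whenever $t < 0$ the perturbation term $-\frac{\kappa t}{p-1}\cos_p^{p-1}\sin_p$ is $\leq 0$, i.e. $\theta_a' \leq \alpha$; moreover since $a < -\bar a$ the solution $\theta_a$ spends this entire lower arc at times $t \leq \tau_a$, and one checks $\tau_a < 0$ (the crossing time can only move left as $a$ decreases — this follows from a comparison/monotonicity argument for \eqref{IVP 3} in the initial time $a$, which I would establish first as a preliminary). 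On the lower arc the more negative value of $t$ makes $\theta_a'$ \emph{smaller} (closer to, but still below, $\alpha$ — wait: $t<0$ and $\sin_p<0$ give $-\kappa t \sin_p < 0$, so $\theta'<\alpha$, and more negative $t$ makes it \emph{more} below $\alpha$), hence it takes \emph{longer} to traverse $(-\pi_p/2,0)$ than it does for $a=-\bar a$. On the upper arc $\theta\in(0,\pi_p/2)$ we have $\sin_p > 0$; here the relevant times for $\theta_a$ are $t \geq \tau_a$, and I would argue that the traversal time of the upper arc is also at least that for the symmetric solution, using that the $\theta_a$-orbit on the upper arc occurs at times shifted left relative to the symmetric one, where $-\kappa t\sin_p$ is \emph{larger} (less negative, or positive when $t<0$) — here one must be careful about signs, and the honest comparison is: reflect via $t\mapsto -t$, $\theta\mapsto -\theta$, which maps solutions of \eqref{IVP 3} with parameter $a$ to solutions running backward; this reflection trick is what turns the upper-arc comparison into another instance of the lower-arc comparison. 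Summing the two halves gives $\delta(a) \geq \delta(-\bar a)$.

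Alternatively — and this may be cleaner — I would phrase the whole thing as a single one-sided comparison lemma for \eqref{IVP 3}: if $\theta_1, \theta_2$ solve \eqref{IVP 3} with $\theta_1(a_1) = \theta_2(a_2) = -\pi_p/2$ and $a_1 \leq a_2 \leq -\bar a$, then the first hitting time of $\pi_p/2$ satisfies $b(a_1) - a_1 \geq b(a_2) - a_2$, by examining $\frac{d}{da}\delta(a)$ via differentiation of the flow (the variational equation for \eqref{IVP 3}) and showing the derivative has a definite sign on $(-\infty,-\bar a)$. The strict inequality for $a \neq -\bar a$ then comes because the perturbation term is not identically zero, so the differential inequalities above are strict on a set of positive measure. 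The main obstacle I anticipate is getting the sign bookkeeping on the upper arc $\theta \in (0,\pi_p/2)$ right: there $\sin_p(\theta) > 0$ and $t$ may be of either sign, so the naive "more negative $t$ helps" reasoning fails and one genuinely needs the reflection symmetry $t\mapsto -t,\ \theta\mapsto-\theta$ (together with the oddness from Proposition \ref{prop symmetric}) to convert it into a comparison of the same type as the lower arc. Once that symmetry is exploited, the remaining estimates are routine ODE comparison, and the strictness is automatic since equality in \eqref{IVP 3}'s perturbation forces $\kappa t \cos_p^{p-1}(\theta)\sin_p(\theta) \equiv 0$ along the orbit, impossible for $\kappa > 0$ on a nondegenerate arc.
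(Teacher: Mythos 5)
There is a genuine gap, and it sits exactly where you flagged your own unease: the upper arc. Your plan is to bound each half of the passage separately, showing that \emph{both} the lower-arc time and the upper-arc time for $\theta_a$ (with $a<-\bar a$) are at least $\bar a$, the common value for the symmetric solution. The lower-arc half is fine once you phrase it as a time-shift comparison (compare $\theta_a(t)$ with $\theta_{-\bar a}\bigl(t-(a+\bar a)\bigr)$, which is a supersolution of \eqref{IVP 3} on $\{\sin_p(\theta)\le 0\}$ because the right-hand side of \eqref{IVP 3} is increasing in $t$ there); comparing the two solutions pointwise in $t$ at different values of $\theta$, as your first formulation suggests, is not a valid ODE comparison, but this is repairable. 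The upper-arc half, however, is false in the direction you need. Since the right-hand side of \eqref{IVP 3} is Lipschitz in $\theta$, solutions cannot cross, and since $\theta'=\alpha>0$ at $\theta=-\pi_p/2$, one gets $\theta_a(t)>\theta_{-\bar a}(t)$ for all $t\ge -\bar a$ whenever $a<-\bar a$; hence $b(a)<b(-\bar a)=\bar a$. Your reflection $t\mapsto -t$, $\theta\mapsto-\theta$ does map solutions to solutions and converts the upper arc of $\theta_a$ into the lower arc of the solution starting at time $-b(a)$ --- but $-b(a)>-\bar a$, so the very lower-arc comparison you established now runs the \emph{other} way and yields $b(a)-\tau_a\le\bar a$. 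In other words, the upper-arc traversal is genuinely \emph{shorter} than $\bar a$ for $a<-\bar a$ (the orbit sits at smaller $t$, where $\theta'$ is larger on $\{\sin_p(\theta)>0\}$). Summing the two halves therefore gives one inequality in each direction and proves nothing.

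The content of the proposition is precisely that the lower-arc excess dominates the upper-arc deficit, and no decoupled per-arc estimate can capture that; some coupling of the two halves is unavoidable. Your alternative suggestion --- differentiating the flow in $a$ --- is closer to what is actually needed: writing $b'(a)=\exp\bigl(\int_a^{b(a)}\partial_\theta F(s,\theta_a(s))\,ds\bigr)$ with $F(t,\theta)=\alpha-\tfrac{\kappa t}{p-1}\cos_p^{p-1}(\theta)\sin_p(\theta)$, one has $\delta'(a)=b'(a)-1$, the symmetry of Proposition \ref{prop symmetric} gives $\delta'(-\bar a)=0$ by oddness of the integrand, and the real work is to show $\int_a^{b(a)} s\,\partial_\theta\bigl(\cos_p^{p-1}\sin_p\bigr)(\theta_a(s))\,ds\ge 0$ for $a\le-\bar a$; the integrand here has no pointwise sign, which is why the argument (the dual of \cite[Proposition 4.4]{LW19eigenvalue}, following \cite{BQ00, NV14}) is more delicate than a one-sided comparison. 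Your strictness remark is fine in principle but moot until the main inequality is established.
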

\begin{proof}
The argument is dual to the proof of \cite[Proposition 4.4]{LW19eigenvalue}, and we omit it here.
\end{proof}

At last, we study $\bar{\delta} :=\delta(-\bar{a}) =2\bar{a}$ as a function of $\l$, having fixed $p$ and $\kappa$.
It's easy to see that $\bar{\delta}$ is a strictly decreasing function and so invertible. Thus we can define its inverse $\l(\delta)$, which is a continuous and decreasing function. Moreover, it can be characterized in the following equivalent way.
\begin{prop}\label{prop4.5}
For fixed $\kappa >0, 1<p<\infty$, we have that given  $\delta>0$, $\l$ is the first nonzero Neumann eigenvalue of the one-dimensional problem
\begin{equation*}
    (p-1)|w'|^{p-2}w'' -\kappa \, t |w'|^{p-2}w' =-\l |w|^{p-2}w
\end{equation*}
on $[-\delta/2,\delta/2]$.
\end{prop}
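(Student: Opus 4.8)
The plan is to prove the two halves $\mu_p(\kappa,\delta)\le\lambda$ and $\mu_p(\kappa,\delta)=\lambda$, where I write $\mu_p(\kappa,\delta)$ for the first nonzero Neumann eigenvalue on $[-\delta/2,\delta/2]$ (so the claim is exactly $\lambda=\mu_p(\kappa,\delta)$), and $\lambda=\lambda(\delta)$ is the value determined by $\bar\delta(\lambda)=\delta$, equivalently by $\bar a=\delta/2$ in Proposition~\ref{prop symmetric}.

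For the inequality $\mu_p(\kappa,\delta)\le\lambda$ I would exhibit an explicit eigenfunction. By Proposition~\ref{prop symmetric}, the solution $w:=w_{-\bar a}$ of \eqref{IVP new} with $a=-\bar a=-\delta/2$ is odd on $[-\delta/2,\delta/2]$, has nonnegative derivative there, and satisfies $w(\pm\delta/2)=\pm1$ with $w'(\pm\delta/2)=0$. Thus $w$ is a nonconstant solution of the one-dimensional Neumann eigenvalue problem on $[-\delta/2,\delta/2]$ with eigenvalue $\lambda$; integrating the equation against $e^{-\frac{\kappa}{2}s^2}$ and using $w'(\pm\delta/2)=0$ (or simply using that $w$ is odd and the weight is even) gives $\int_{-\delta/2}^{\delta/2}|w|^{p-2}w\,e^{-\frac{\kappa}{2}s^2}\,ds=0$, so $\lambda$ is an admissible nonzero Neumann eigenvalue and hence $\mu_p(\kappa,\delta)\le\lambda$.

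For the reverse I would analyze a first eigenfunction. Let $\mu:=\mu_p(\kappa,\delta)\le\lambda$ with eigenfunction $\psi$, normalized so that $\min\psi=-1$. From the one-dimensional theory of the (weighted) $p$-Laplacian — simplicity of the first nonzero eigenvalue together with Sturm-type oscillation; compare \cite[Chapter~1]{DP05} — $\psi$ is strictly monotone with exactly one sign change, say increasing. The problem on $[-\delta/2,\delta/2]$ with weight $e^{-\frac{\kappa}{2}s^2}$ is invariant under $t\mapsto-t$, $\psi\mapsto-\psi(-\cdot)$, so $-\psi(-\cdot)$ is again an eigenfunction for $\mu$; simplicity forces $-\psi(-t)=c\,\psi(t)$ with $c=\pm1$, and $c=-1$ would make $\psi$ even, contradicting that it has a single sign change (an even admissible function must change sign on $(0,\delta/2)$, hence twice on the whole interval). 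Therefore $\psi$ is odd, and with the normalization $\min\psi=-1$ it coincides with the solution $w_{-\delta/2}$ of \eqref{IVP new} for parameter $\mu$, which is then odd; by the uniqueness in Proposition~\ref{prop symmetric} this forces $\bar a=\delta/2$ for the parameter $\mu$, i.e. $\bar\delta(\mu)=\delta=\bar\delta(\lambda)$. Since $\bar\delta$ is strictly decreasing, hence injective, $\mu=\lambda$, which combined with the previous step gives $\mu_p(\kappa,\delta)=\lambda=\lambda(\delta)$.

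The main obstacle is the structural input on $\psi$: that the first nonzero Neumann eigenvalue of this one-dimensional weighted $p$-Laplacian is simple and that its eigenfunction is strictly monotone with a single sign change. For $p=2$ this is classical Sturm–Liouville theory; for general $1<p<\infty$ one must quote the corresponding facts for the one-dimensional $p$-Laplacian, or reprove monotonicity directly by a reflection/comparison argument (using Proposition~\ref{prop central} and the strict monotonicity of $\bar\delta$ to rule out interior critical points of $\psi$). Once this is granted, the remaining points — the boundary values, the admissibility constraint, and matching the normalized eigenfunction with the solution of \eqref{IVP new} — are routine, and the conclusion follows from the injectivity of $\bar\delta$.
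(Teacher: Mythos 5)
The paper states Proposition \ref{prop4.5} with no proof at all (it is offered as an ``equivalent characterization'' of the inverse of $\bar\delta$), so there is no argument of the authors' to compare against; judged on its own, your two-sided strategy is the right one, and the first half is complete: the odd solution $w_{-\bar a}$ with $\bar a=\delta/2$ satisfies the constraint by oddness, and multiplying the ODE (written as $(e^{-\kappa t^2/2}|w'|^{p-2}w')'=-\l|w|^{p-2}we^{-\kappa t^2/2}$) by $w$ and integrating by parts with $w'(\pm\delta/2)=0$ shows its Rayleigh quotient equals $\l$, so $\mu_p(\kappa,\delta)\le\l$.

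In the second half there are two gaps, one of which you did not flag. First, before you may invoke the uniqueness of $\bar a$ in Proposition \ref{prop symmetric} for the parameter $\mu$, you need $\mu>\l_0$, since $\bar a(\mu)$ and $\bar\delta(\mu)$ are only defined for eigenvalues above $\l_0$; this does not follow from $\mu\le\l$. It can be supplied: once $\psi$ is odd and strictly increasing with $\psi'(0)>0$, uniqueness (Proposition \ref{prop Cauchy}) identifies $\psi$ on $[-\delta/2,\delta/2]$ with a rescaling of the global odd solution of Lemma \ref{lemma 3.1}, which for $\mu\le\l_0$ has everywhere positive derivative, contradicting $\psi'(\delta/2)=0$. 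Second, the Sturmian input you flag (simplicity of $\mu$, single sign change and strict monotonicity of $\psi$) is genuinely needed for your oddness argument and must be quoted from half-linear Sturm--Liouville theory (cf.\ \cite{DP05}); note that monotonicity does follow from the single-zero property by a flux argument (if $\psi'(t_0)=0$ at an interior $t_0$, integrating the equation over $[-\delta/2,t_0]$ and over $[t_0,\delta/2]$ forces a sign change of $\psi$ in each subinterval). Alternatively, simplicity can be bypassed entirely: once $\psi$ is strictly increasing with $\min\psi=-1$, uniqueness gives $\psi=w_{-\delta/2}$ for the parameter $\mu$ with $\delta(-\delta/2)=\delta$, and Proposition \ref{prop central} (together with its mirror version for solutions normalized at the maximum, obtained from the symmetry $w\mapsto -w(-\cdot)$) yields $\delta\ge\bar\delta(\mu)$, hence $\mu\ge\l$ by the strict monotonicity of $\bar\delta$; this parallels how the comparison is used in the proof of Theorem \ref{Thm Main} and is presumably the argument the authors intend.
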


\section{Proof of Theorem \ref{Thm Main}}
After all the preparations in the previous two sections, we finally prove Theorem \ref{Thm Main} in this section.
\begin{proof}[Proof of Theorem \ref{Thm Main}]
It suffices to prove the case $\kappa >0$, as the case $\kappa \leq 0$ was proved in \cite{LW19eigenvalue}. 
Let $u$ be an eigenfunction of $\Delta_{p,f}$ associated to the eigenvalue $\l$. In view of
\begin{equation*}
    \int_M |u|^{p-2} u \, e^{-f} =0,
\end{equation*}
we can normalize $u$ so that $\min \{ u\} =-1$ and $\max\{ u\} \in (0,1]$.

By Proposition \ref{prop symmetric} and \ref{prop range},
there exists an interval $[a,b]$ and a solution $w$ of \eqref{IVP new} such that $w$ is strictly increasing on $[a,b]$ with $w(a) =-1 =\min \{ u\}$ and $w(b) = \max \{ u \} \in (0,1]$. Moreover, we have $\l=\l_{p}([a,b])$, the first nonzero Neumann eigenvalue of the eigenvalue problem $ |w'|^{p-2}w'' -\kappa \, t (p-1)|w'|^{p-2}w'+\lambda |w|^{p-2}w =0$ on $[a,b]$.

Let $x$ and $y$ be such that $u(x)=\min_{M} u$ and $u(y)=\max_M u$.
Consider a unit speed minimizing geodesic $\gamma:[0,d(x,y)]\to M $ joining $x$ and $y$. Let $h(t)=u(\gamma(t))$ and choose $I \subset [0,d(x,y)]$ such that 
$h'>0$ on $I$ and $h^{-1}$ is well-defined in a subset of full measure of $[-1,u_{\max}]$. Then we get, by change of variables and the sharp gradient estimate proved in Theorem \ref{Thm grad comp}, that 
\begin{eqnarray*}
    D &\geq& \int_0^{d(x,y)} dt \geq \int_I \; dt \geq \int_{-1}^{u_{\max}} \frac{dy}{h'(h^{-1}(y))} \geq \int_{-1}^{w_{\max}} \frac{dy}{w'(w^{-1}(y))} \\
    &=&\int_a^{b(a)} dt  =\delta(a) \geq \delta(\bar{a}),
\end{eqnarray*}
where the last inequality is proved in Proposition \ref{prop central}.
This and Proposition \ref{prop4.5} yield immediately the desired estimate.
\end{proof}

\section{Sharpness}
In this section, we show that the lower bound \eqref{eigenvalue estimate} given in Theorem \ref{Thm Main} is sharp for $n\geq 3$ for any $\kappa$ or $n\geq 2$ for $\kappa \leq 0$. More precisely, for each $\eps >0$, we construct a Bakry-\'Emery manifold $(M,g,f)$ with diameter $D$ and satisfying $\l_{p,f} < \mu_p(\kappa, D) +\eps$. 

For manifolds with boundary, the construction is rather simple. Take a cylinder $rS^{n-1} \times [-D/2, D/2]$ for $r$ sufficiently small, with quadratic potential $f=\frac{\kappa}{2}s^2$. 
It is easy to see this is a Bakry-\'Emery manifold with diameter $D(1+o(r))$ and $\Ric+ \nabla^2 f \geq \kappa \, g$. 
Let $w$ be an eigenfunction associated to the eigenvalue $\mu_p(\kappa, D)$ of the one-dimensional eigenvalue problem \eqref{ODE}
on $[-D/2, D/2]$. 
Substituting the test function $\psi(z,s)=w(s)$ into the Rayleigh quotient yields
\begin{equation*}
    \l_{p,f} \leq \frac{\int_M |\nabla \psi |^p e^{-f} d\mu_g}{\int_M |\psi|^p e^{-f} d\mu_g} =\frac{\int_{-D/2}^{D/2} |w'|^p e^{-f} ds }{\int_{-D/2}^{D/2} |w|^p e^{-f} ds } = \mu_p(\kappa, D)
\end{equation*}
It follows that $\l_{p,f} \to \mu_p(\kappa, D)$ as $r\to 0$, proving the sharpness of the estimate \eqref{eigenvalue estimate} in Theorem \ref{Thm Main}.

To demonstrate the sharpness of \eqref{eigenvalue estimate} in the smaller class of manifolds without boundary, we need a more involved construction. 
The idea is to attach spherical caps to the ends of the above examples. 
The Bakry-E\'mery manifolds are constructed exactly the same as in \cite{AN12}. The Riemannian manifold $M$, which is approximately a thin cylinder with hemispherical caps attached at each end, is constructed as follows. Let $\gamma$ be the curve in $\R^2$ with curvature $k$ given as function of arc length for suitably small $r>0$ and $\delta >0$ small compared ro $r$, by 
\begin{equation}\label{eq6.1}
    k(s) =\begin{cases} \frac{1}{r}, & s\in [0,\frac{\pi r}{2} -\delta], \\
    \vp\left(\frac{s-\frac{\pi r}{2}}{\delta}  \right) \frac{1}{r}, & s \in [\frac{\pi r}{2} -\delta, \frac{\pi r}{2} +\delta], \\
    0, & s\in [\frac{\pi r}{2} +\delta, D],
    \end{cases}
\end{equation}
and extended to be even under reflection in both $s=0$ and $s=D/2$. 
Here $\vp$ is
a smooth nonincreasing function with $\vp(s) = 1$ for $s \leq  -1$, $\vp(s) = 0$ for $s \geq 1$, and
satisfying $\vp(s)+ \vp(-s) = 1$.
Geometrically, this corresponds to a pair of line segments parallel to the $x$ axis, capped by semicircles of radius $r$ and smoothed at the joins. 
Let $(x(s), y(s))$ be the corresponding embedding and we choose the point corresponding to $s=0$ to have $y(0)=0$ and $y'(0)=1$. 
Let $(M^n,g)$ be the hypersurface of rotation in
$\R^{n+1}$ given by $\{(x(s), y(s), z ) : s \in \R, z\in S^{n-1}\}$. 

The function $f$ on $M$ is a function of $s$ only, given by $f'(0)=0$ (the value of $f(0)$ is immaterial) and 
\begin{equation}\label{eq6.2}
    f''(s) =\begin{cases}
\kappa \left(1-\frac{D}{\pi r} \right), & s\in [0,\frac{\pi r}{2} -\delta],\\
\kappa \vp\left(\frac{s-\frac{\pi r}{2}}{\delta}\right) \left(1-\frac{D}{\pi r} \right) + \kappa\left(1-\vp\left(\frac{s-\frac{\pi r}{2}}{\delta}\right)\right), & s \in [\frac{\pi r}{2} -\delta, \frac{\pi r}{2} +\delta], \\
\kappa, & s\in [\frac{\pi r}{2} +\delta, D].
\end{cases}
\end{equation}
This implies $f'(D/2)=0$. We also extend $f$ to be even under the reflection in $s=0$ and $s=D/2$. 

With the above choices, let's compute the Bakry-\'Emery Ricci tensor of $(M,g,f)$. The eigenvalues of the second fundamental form are $k(s)$ (in the $s$ direction) and $\frac{\sqrt{1-(y')^2}}{y} $ in the orthogonal directions. 
Therefore the Gauss-Codazzi equations imply that the Ricci tensor has eigenvalues $(n-1)k(s)\frac{\sqrt{1-(y')^2}}{y} $ in the $s$ direction, and $k(s)\frac{\sqrt{1-(y')^2}}{y}  +(n-1)\frac{1-(y')^2}{y^2} $ in the orthogonal directions. 
The eigenvalues of the Hessian of $f$ can be calculated as follows: 
The curves of fixed $z$ in $M$ are
geodesics parametrized by $s$, so the Hessian in this direction is just $f''$ as given in \eqref{eq6.2}.
Since $f$ depends only on $s$ we also have that 
$\nabla^2 f(\p_s, e_i) =0$ for $e_i$ tangent to $S^{n-1}$ and $\nabla^2 f(e_i, e_j) =\frac{y'}{y}f' \delta_{ij}$.

The identities $y(s)=\int_0^s \cos(\theta(\tau)) d\tau$ and $y'(s)=\cos(\theta(s))$, where $\theta(s) =\int_0^s k(\tau)d \tau$ applied to \eqref{eq6.1} yields that as $\delta \to 0$, 
\begin{equation*}
    y(s)=\begin{cases} 
    r\sin(s/r), & s\in [0,\frac{\pi r}{2} -\delta ],\\
    r(1+o(\delta)), & s\in [\frac{\pi r}{2} -\delta ,D],
    \end{cases}
\end{equation*}
and 
\begin{equation*}
    y'(s)=\begin{cases} 
    \cos(s/r), & s\in [0,\frac{\pi r}{2} -\delta ],\\
    o(\delta), & s\in [\frac{\pi r}{2} -\delta , \frac{\pi r}{2} -\delta], \\
    0,  & s\in [\frac{\pi r}{2} +\delta , D].
    \end{cases}
\end{equation*}
Straightforward calculations then give the following expressions for the Bakry-\'Emery Ricci tensor $\Ric_f =\Ric +\nabla^2 f$: 
\begin{equation*}
    \Ric_f (\p_s, \p_s)=\begin{cases} 
    \kappa + \frac{n-1}{r^2} -\frac{\kappa D}{\pi r}, & s\in [0,\frac{\pi r}{2} -\delta ],\\
    \kappa + \vp\left(\frac{2-\frac{\pi r}{2}}{\delta} \right) \left(\frac{n-1}{r^2} (1+o(\delta)) -\frac{\kappa D}{\pi r} \right), & s\in [\frac{\pi r}{2} -\delta , \frac{\pi r}{2} -\delta], \\
    \kappa,  & s\in [\frac{\pi r}{2} +\delta , D],
    \end{cases}
\end{equation*}
and 
\begin{equation*}
    \Ric_f (e, e)=\begin{cases} 
    \frac{n-1}{r^2} +\frac{\kappa s \left(1-\frac{D}{\pi r}\right)}{r \tan(s/r)}, & s\in [0,\frac{\pi r}{2} -\delta ],\\
    \frac{n-2}{r^2} +o(\delta), & s\in [\frac{\pi r}{2} -\delta , \frac{\pi r}{2} -\delta], \\
    \frac{n-2}{r^2} \left(1+o(\delta) \right),  & s\in [\frac{\pi r}{2} +\delta , D],
    \end{cases}
\end{equation*}
while $\Ric_f(\p_s, e)=0$ for any unit vector $e$ tangent to $S^{n-1}$. 
Clearly, we have $\Ric_f \geq \kappa \, g$ for $r$ and $\delta$ small enough for any $\kappa \in \R$ if $n \geq 3$ and for $\kappa \leq 0$ if $n=2$. Moreover, the diameter of $M$ is $D(1+o(\delta))$. 

Finally, we show that for the Bakry-\'Emery manifold $(M, g, f)$ constructed above, the first nonzero eigenvalue $\l_{p,f}$ of $\Delta_{p,f}$ can be made arbitrarily close to $\mu_p(\kappa, D)$ by choosing $r$ and $\delta$ sufficiently small. 
Let $\psi$ be defined by 
\begin{equation*}
    \psi(z,s)= \begin{cases} w(s-D/2), & s\in [\frac{\pi r}{2}+\delta, D-\left(\frac{\pi r}{2}+\delta \right)],\\
    w(D/2 -\frac{\pi r}{2}-\delta), & s \in [0, \frac{\pi r}{2}+\delta] \cup [D-\frac{\pi r}{2}-\delta, D],
    \end{cases}
\end{equation*}
where $w$ is the solution of 
\begin{equation*}
    (p-1)|w'|^{p-2}w'' -\kappa s |w'|^{p-2}w' +\mu_p(\kappa, D-\pi r-2\delta) |w|^{p-2}w=0
\end{equation*}
with $w(0)=0$, $w'(0)=1$ and $w'(D/2-\frac{\pi r}{2} -\delta)=0$. 
Plugging the test function $\psi$ into the Rayleigh quotient gives 
\begin{eqnarray*}
    \l_{p,f} &\leq& \frac{\int_M |\nabla \psi |^p e^{-f} d\mu_g}{\int_M |\psi|^p e^{-f} d\mu_g}  \\
    &=& \frac{\mu_p(\kappa, D-\pi r-\delta)\int_{-\frac{\pi r}{2}-\delta/2}^{D-\frac{\pi r}{2}-\delta/2} |w|^p e^{-f} \, ds }{\int_0^D |w|^p e^{-f} \, ds } \\
    &\leq&  \mu_p(\kappa, D-\pi r-\delta)
\end{eqnarray*}
It follows that $\l_{p,f} \to \mu_p(\kappa, D)$ as $r$ and $\delta$ approach zero, thus proving the sharpness of the estimate \eqref{eigenvalue estimate} in Theorem \ref{Thm Main}.

\section*{Acknowledgments} {The authors would like to thank Professors Ben Andrews, Zhiqin Lu, Lei Ni, Guofang Wei and Richard Schoen for their interests in this work.}

\bibliographystyle{mrl}
\bibliography{ref}

\begin{thebibliography}{10}

\bibitem{Andrewssurvey15}
B.~Andrews, \emph{Moduli of continuity, isoperimetric profiles, and multi-point
  estimates in geometric heat equations}, in Surveys in differential geometry
  2014. {R}egularity and evolution of nonlinear equations, Vol.~19 of
  \emph{Surv. Differ. Geom.}, 1--47, Int. Press, Somerville, MA (2015).

\bibitem{AC11}
B.~Andrews and J.~Clutterbuck, \emph{Proof of the fundamental gap conjecture},
  J. Amer. Math. Soc. \textbf{24} (2011), no.~3,  899--916.

\bibitem{AC13}
---{}---{}---, \emph{Sharp modulus of continuity for parabolic equations on
  manifolds and lower bounds for the first eigenvalue}, Anal. PDE \textbf{6}
  (2013), no.~5,  1013--1024.

\bibitem{AN12}
B.~Andrews and L.~Ni, \emph{Eigenvalue comparison on {B}akry-{E}mery
  manifolds}, Comm. Partial Differential Equations \textbf{37} (2012), no.~11,
  2081--2092.

\bibitem{BQ00}
D.~Bakry and Z.~Qian, \emph{Some new results on eigenvectors via dimension,
  diameter, and {R}icci curvature}, Adv. Math. \textbf{155} (2000), no.~1,
  98--153.

\bibitem{DP05}
O.~Do\v{s}l\'{y} and P.~\v{R}eh\'{a}k, Half-linear differential equations, Vol.
  202 of \emph{North-Holland Mathematics Studies}, Elsevier Science B.V.,
  Amsterdam (2005), ISBN 0-444-52039-2.

\bibitem{Kroger92}
P.~Kr\"{o}ger, \emph{On the spectral gap for compact manifolds}, J.
  Differential Geom. \textbf{36} (1992), no.~2,  315--330.

\bibitem{Li79}
P.~Li, \emph{A lower bound for the first eigenvalue of the {L}aplacian on a
  compact manifold}, Indiana Univ. Math. J. \textbf{28} (1979), no.~6,
  1013--1019.

\bibitem{LY80}
P.~Li and S.~T. Yau, \emph{Estimates of eigenvalues of a compact {R}iemannian
  manifold}, in Geometry of the {L}aplace operator ({P}roc. {S}ympos. {P}ure
  {M}ath., {U}niv. {H}awaii, {H}onolulu, {H}awaii, 1979), Proc. Sympos. Pure
  Math., XXXVI, 205--239, Amer. Math. Soc., Providence, R.I. (1980).

\bibitem{LW19eigenvalue}
X.~Li and K.~Wang, \emph{Sharp lower bound for the first eigenvalue of the
  weighted $p$-Laplacian}, arXiv:1910.02295  (2019) \text{}.

\bibitem{Lott03}
J.~Lott, \emph{Some geometric properties of the {B}akry-\'{E}mery-{R}icci
  tensor}, Comment. Math. Helv. \textbf{78} (2003), no.~4,  865--883.

\bibitem{Matei00}
A.-M. Matei, \emph{First eigenvalue for the {$p$}-{L}aplace operator},
  Nonlinear Anal. \textbf{39} (2000), no. 8, Ser. A: Theory Methods,
  1051--1068.

\bibitem{NV14}
A.~Naber and D.~Valtorta, \emph{Sharp estimates on the first eigenvalue of the
  {$p$}-{L}aplacian with negative {R}icci lower bound}, Math. Z. \textbf{277}
  (2014), no. 3-4,  867--891.

\bibitem{Ni13}
L.~Ni, \emph{Estimates on the modulus of expansion for vector fields solving
  nonlinear equations}, J. Math. Pures Appl. (9) \textbf{99} (2013), no.~1,
  1--16.

\bibitem{SWW19}
S.~Seto, L.~Wang, and G.~Wei, \emph{Sharp fundamental gap estimate on convex
  domains of sphere}, J. Differential Geom. \textbf{112} (2019), no.~2,
  347--389.

\bibitem{Tolksdorf84}
P.~Tolksdorf, \emph{Regularity for a more general class of quasilinear elliptic
  equations}, J. Differential Equations \textbf{51} (1984), no.~1,  126--150.

\bibitem{Valtorta12}
D.~Valtorta, \emph{Sharp estimate on the first eigenvalue of the
  {$p$}-{L}aplacian}, Nonlinear Anal. \textbf{75} (2012), no.~13,  4974--4994.

\bibitem{WW09}
G.~Wei and W.~Wylie, \emph{Comparison geometry for the {B}akry-{E}mery {R}icci
  tensor}, J. Differential Geom. \textbf{83} (2009), no.~2,  377--405.

\bibitem{WZ17}
Y.~Zhang and K.~Wang, \emph{An alternative proof of lower bounds for the first
  eigenvalue on manifolds}, Math. Nachr. \textbf{290} (2017), no.~16,
  2708--2713.

\bibitem{ZY84}
J.~Q. Zhong and H.~C. Yang, \emph{On the estimate of the first eigenvalue of a
  compact {R}iemannian manifold}, Sci. Sinica Ser. A \textbf{27} (1984),
  no.~12,  1265--1273.

\end{thebibliography}

\end{document}